\documentclass[12pt]{article}
\bibliographystyle{plain}%规定参考文献的样式
\usepackage{amsfonts}
\usepackage{amsmath,amssymb,amsthm}
\usepackage[usenames]{color}
\usepackage{mathrsfs}
\usepackage{syntonly}
\usepackage[top=1in,bottom=1.4in,left=1in,right=1in]{geometry}
\usepackage[all,cmtip]{xy}
\usepackage{bbm}
\usepackage{mathrsfs}
\usepackage{dsfont}
\usepackage{enumitem}
\usepackage{fouridx}
\usepackage[stable]{footmisc}
\usepackage{array}
\usepackage{caption}
\usepackage{latexsym}
\usepackage{extarrows}
\usepackage{amsfonts}
\usepackage{amsmath}
\usepackage{amsthm}
\usepackage{amssymb}
\usepackage{latexsym}
\usepackage{indentfirst}
\usepackage[T1]{fontenc}
\usepackage{lipsum}
\usepackage{tikz}
\usepackage{tikz-cd}
\usepackage{pgfplots}
\pgfplotsset{compat=1.15}
\usetikzlibrary{calc,arrows, automata, positioning, mindmap,cd,babel,shadows,trees,patterns}
\allowdisplaybreaks

\newtheorem{thm}{Theorem}[section] %to number theorems, etc. within  sections
\newtheorem{proposition}[thm]{Proposition}
\newtheorem{corollary}[thm]{Corollary}
\newtheorem{lemma}[thm]{Lemma}
\newtheorem{definition}[thm]{Definition}
\newtheorem{remark}[thm]{Remark}
\numberwithin{equation}{section}

\textheight 216mm   % 8.5in
\textwidth 152mm    %   6in

\oddsidemargin 3.6mm    % centered on DIN A4 paper
\topmargin -10mm    % dto.
\begin{document}

%%%  Abbreviations

\title{\Large\bf  Simple weight modules for Yangian $\operatorname{Y}(\mathfrak{sl}_{2})$}
%\vspace{17 mm}
\author{{Yikun Zhou$^{1}$, Yilan Tan$^{1,*}$, Limeng Xia$^{1,2}$}}
%\affiliation{Department of Mathematics, University of California, Riverside, CA 92521, United States}

\date{} % 01 Jan 2015 - start.

\maketitle
\begin{abstract}
  Let $\mathfrak{g}$ be a finite-dimensional simple Lie algebra over $\mathbb{C}$.  A $\operatorname{Y}(\mathfrak{g})$-module is said to be weight if it is a weight $\mathfrak{g}$-module. We give a complete classification of simple weight modules for $\operatorname{Y}(\mathfrak{sl}_2)$ which admits a one-dimensional weight space. We prove that there are four classes of such modules: finite, highest weight, lowest weight and dense modules. Different from the classical $\mathfrak{sl}_{2}$-representation theory, we show that there exist a class of $\operatorname{Y}(\mathfrak{sl}_{2})$ irreducible modules which have uniformly 2-dimensional weight spaces. 
\end{abstract}

\vspace{10 mm}

%\tableofcontents
%%%\vspace{7 mm}
%%%
{\it Key words: Yangian; Weight module; Simple module; Dense module.}
%%%

\vspace{20 mm}
\noindent Author Addresses:

1. School of Mathematical Science, Jiangsu University, Zhenjiang, Jiangsu, 212013, China.

2. Institute of Applied System Analysis, Jiangsu University, Zhenjiang, Jiangsu, 212013, China.

\vspace{5mm}
\noindent Corresponding Author(*) 

\noindent Email: tanyanlan@ujs.edu.cn\vspace{0.5cm}

\noindent The third author of this paper thanks the support of the National Natural Science Foundation of China (Grants No. 11871249, 12171155)
%\tableofcontents

%\vspace{15 mm}

%\noindent
%A.M.\newline
%{\it School of Mathematics and Statistics\newline
%University of Sydney,
%NSW 2006, Australia\newline}
%{\tt alexm@maths.usyd.edu.au}

%\vspace{7 mm}

%\noindent
%E.R.\newline
%{\it LAPTH, Chemin de Bellevue, BP 110\newline
%F-74941 Annecy-le-Vieux cedex, France\newline}
%{\tt ragoucy@lapp.in2p3.fr}
\newpage

\section{Introduction}
Let $\mathfrak{g}$ be a finite-dimensional simple Lie algebra over $\mathbb{C}$. The Yangian $\operatorname{Y}(\mathfrak{g})$ and the quantum affine algebra $\operatorname{U_{q}}({\hat{\mathfrak{g}}})$ constitute two remarkable families of the quantum groups of affine type. The Yangian $\operatorname{Y}(\mathfrak{g})$ is a unital associative algebra which is a Hopf algebra deformation of universal enveloping algebra of the current algebra $\mathfrak{g}[t]$.  In the late 1970s and early 1980s, The Yangian used for the first time in the work of Ludvig Faddeev and his school  concerning the quantum inverse scattering method in statistical mechanics. Later, it appears as a symmetric group in different physics models.  The representation of the Yangian could be used to construct the rational solutions of the quantum Yang-Baxter equation\cite{ChPr4}. As stated in \cite{NT}, ``the physical data such as mass formula, fusion angle, and the spins of integrals of motion can be extracted from the Yangian highest weight representations.'' 

The finite-dimensional simple modules and highest weight modules of Yangians were studied in the last forty years, see \cite{VMFY, ChPr4, ChPr3, KhNaPa, Mo2, TaGu} and the references in. The representation theory of $\operatorname{Y}(\mathfrak{sl}_{2})$ is of paramount importance to  understand the structure of simple $\operatorname{Y}(\mathfrak{g})$-modules for general $\mathfrak{g}$. It was showed by Chari and Pressley in \cite{ChPr3} that every finite-dimensional simple module for $\operatorname{Y}(\mathfrak{sl}_{2})$ is a tensor product of modules which are simple under $\mathfrak{sl}_{2}$. Then the irreducible condition for $\operatorname{Y}(\mathfrak{g})$ follows. Another example is the local Weyl modules for $\operatorname{Y}(\mathfrak{g})$, see \cite{TaGu}. A Local Weyl module $W(\pi)$ is a highest weight object with a nice property: every finite-dimensional hightest weight module associated to $\pi$ is a quotient of $W(\pi)$. The local weyl modules for $\operatorname{Y}(\mathfrak{sl}_{2})$ is isomorphic to an ordered tensor product of fundamental representations of $\operatorname{Y}(\mathfrak{sl}_{2})$. It is crucial to obtain the structure of the local Weyl module $W(\pi)$ for general $\operatorname{Y}(\mathfrak{g})$.     %and their modules play an important role in the classification of local Weyl modules for Yangians $\operatorname{Y}(\mathfrak{g})$, see \cite{Ta, TaGu}.

The purpose of this paper is to construct and classify a new class of simple modules of the Yangian $\operatorname{Y}(\mathfrak{sl}_{2})$. Analogous to the weight modules of current algebras \cite{Lau}, a $\operatorname{Y}(\mathfrak{g})$-module is called weight if it is a  weight $\mathfrak{g}$-module. Both the finite-dimensional simple modules and highest weight modules are weight modules, which have something in common: there exists a one-dimensional weight space. In this paper we study the simple weight module for $\operatorname{Y}(\mathfrak{sl}_{2})$ which admits a one-dimensional weight space. Due to the existence of the evaluation homomorphism for $\operatorname{Y}(\mathfrak{sl}_{n})$, every irreducible $\mathfrak{sl}_{n}$-module is a simple $\operatorname{Y}(\mathfrak{sl}_{n})$-module. The  irreducible dense modules of $\mathfrak{sl}_{2}$  come in as another class of  weight module for $\operatorname{Y}(\mathfrak{sl}_{2})$, which are still called dense in this paper. One of our contributions is to give the structure of dense modules, which are parametrized by three parameters $\mu,\tau,b_{\mu}$, see Section 3 for detail. Our main result is 
\begin{thm}\label{thm1} 
  Let $V$ be a simple weight module for $\operatorname{Y}(\mathfrak{sl}_{2})$ which admits a one-dimensional weight space. Then $V$ is isomorphic to one of the following modules:
  \begin{enumerate}[label=(\arabic*)]
    \item A finite-dimensional simple module.
    \item An infinite-dimensional simple highest weight module.
    \item An infinite-dimensional simple lowest weight module.
    \item A simple dense module $V(\mu,\tau,b_{\mu})$.
  \end{enumerate}
  \end{thm}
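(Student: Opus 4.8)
The plan is to start from a simple weight $\operatorname{Y}(\mathfrak{sl}_2)$-module $V$ with a one-dimensional weight space, say of weight $\lambda$, and to analyze the support $\operatorname{supp}(V) \subseteq \lambda + 2\mathbb{Z}$ together with the action of the Cartan element $h$ and the raising/lowering generators. The first step is a standard dichotomy: either $\operatorname{supp}(V)$ is bounded above, bounded below, or all of $\lambda + 2\mathbb{Z}$. If the support is bounded above, I would take a nonzero vector in the top weight space; since $V$ is a weight module for $\operatorname{Y}(\mathfrak{sl}_2)$ and the positive part of $\mathfrak{sl}_2$ kills it, one checks that the full positive half of the Yangian (in the sense of the triangular-type decomposition coming from the RTT or Drinfeld presentation) annihilates it, so the vector is a highest weight vector in the Yangian sense, and $V$ is a simple highest weight module. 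If in addition $V$ is finite-dimensional we land in case (1), otherwise case (2); the bounded-below case is symmetric and gives (3). The genuinely new work is the dense case.

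Assume then that $\operatorname{supp}(V) = \lambda + 2\mathbb{Z}$, so $V$ is a dense weight $\mathfrak{sl}_2$-module, and by Mathieu-type results (or the explicit description recalled in Section 3) every weight space has finite dimension bounded by the dimension of the one-dimensional space — wait, that bound is not automatic, so instead I would use the hypothesis directly: there exists one weight, WLOG call its weight $\mu$, with $\dim V_\mu = 1$. I would then run the following argument. Restrict $V$ to $\mathfrak{sl}_2$; it decomposes into a direct sum of simple $\mathfrak{sl}_2$-submodules only if it is semisimple, which need not hold, so instead I work with the generators $e(r), f(r), h(r)$ ($r \ge 0$) of $\operatorname{Y}(\mathfrak{sl}_2)$ in the Drinfeld presentation and track how they shift weights: $e(r)$ raises by $2$, $f(r)$ lowers by $2$, $h(r)$ preserves. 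The key is to show that starting from the one-dimensional weight space $V_\mu = \mathbb{C}v_\mu$, the vectors obtained by applying strings of $e(r)$'s and $f(r)$'s span $V$, and to extract from the Yangian relations enough constraints to pin down all structure constants in terms of three scalars. Concretely, I expect $h(1)$ (equivalently the element often denoted $t_1$ or the first nontrivial Cartan mode) acts on $V_\mu$ by a scalar, giving one parameter; the "spectral" parameter governing the evaluation-type twist gives another; and the action of $e(0)f(0)$ or a Casimir-like combination on $v_\mu$ gives $b_\mu$. I would then build a candidate module $V(\mu,\tau,b_\mu)$ with explicit basis $\{v_{\mu+2k}\}_{k\in\mathbb{Z}}$ in the one-dimensional-weight-space region — more precisely, since nearby weight spaces may be two-dimensional as the abstract says, I would allow a basis that is one-dimensional at $\mu$ and grows, and verify the Yangian relations hold, i.e. that the formulas define a genuine $\operatorname{Y}(\mathfrak{sl}_2)$-module; then show it is simple and that the original $V$ must be isomorphic to it by matching the three invariants and using simplicity on both sides (a nonzero homomorphism between simple modules with the same central/weight data is an isomorphism).

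The main obstacle, and where I would spend the most care, is the verification that the abstract dense module $V$ is \emph{determined} by the triple $(\mu,\tau,b_\mu)$ and coincides with the explicit model — in other words, proving that these three scalars are a complete set of invariants. This requires showing that all higher modes $e(r), f(r), h(r)$ for $r \ge 2$ act on $V$ in a way forced by their action for $r \le 1$ together with the Yangian relations and the weight grading; the Drinfeld relations (especially the ones of the form $[h(r+1),e(s)] - [h(r),e(s+1)] = \pm(h(r)e(s) + e(s)h(r))$ and the Serre-type relations) should let me induct on $r$, but the one-dimensionality of $V_\mu$ is essential to break the recursion cleanly — it forces the action of the whole commutative subalgebra generated by the $h(r)$ on $v_\mu$ to be scalar, and these scalars are then rigidly propagated. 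A secondary technical point is handling the weight spaces of dimension $2$: I must check that the "extra" dimension does not introduce new parameters, which I expect follows because those $2$-dimensional spaces are reached from $V_\mu$ by applying $e(r)$ for two different values of $r$ (say $r=0,1$) and the relations express $e(r)$, $r\ge 2$, in terms of these. Once the invariants are shown complete, simplicity of the model and the classification of the three easy cases assemble into Theorem~\ref{thm1}.
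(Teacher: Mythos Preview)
Your proposal has a genuine gap in the dense case, rooted in a misreading of the target structure: you anticipate that weight spaces adjacent to $V_\mu$ may be two-dimensional and plan to ``check that the extra dimension does not introduce new parameters.'' In fact the heart of the paper's argument is precisely to show that \emph{every} weight space of $V$ is one-dimensional once one of them is and $V$ is simple. The modules with uniformly two-dimensional weight spaces mentioned in the abstract (Section~5) have \emph{no} one-dimensional weight space at all and so lie outside the scope of Theorem~1.1; they are not a feature of the dense modules $V(\mu,\tau,b_\mu)$, which have one-dimensional weight spaces throughout.

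The mechanism for forcing one-dimensionality is what you are missing, and your inductive scheme on the mode index $r$ does not supply it. Starting from $V_\mu=\mathbb{C}w$, the paper first uses simplicity to find $u\in V_{\mu-2}$ with $X_0^+u=w$ (if no such $u$ existed, one shows inductively that $X_n^+V_{\mu-2}=0$ for all $n$, producing a proper submodule). With this auxiliary vector in hand, the Yangian relations yield, by induction on $n$, first that $X_n^+w\in\mathbb{C}w_1$ for all $n$ where $w_1:=X_0^+w$, and then that $H_nw_1\in\mathbb{C}w_1$ for all $n$. Iterating, every $w_k:=(X_0^{\pm})^{|k|}w$ is a common eigenvector for all $H_n$. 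Since $\operatorname{Y}(\mathfrak{sl}_2)$ is generated by $X_0^\pm,H_0,H_1$ (Lemma~\ref{lem5}), the span $W=\operatorname{span}\{w_k:k\in\mathbb{Z}\}$ is a submodule, and simplicity gives $V=W$. Only \emph{after} this reduction does one identify $V$ as a simple dense $\mathfrak{sl}_2$-module and read off the three parameters from Section~3. Your plan to ``match invariants'' between $V$ and an explicit model cannot start without first knowing that the $H_n$ act by scalars on each weight space; and if some $V_{\mu+2}$ genuinely had dimension two, the relations you cite would only tell you that $X_{r+1}^+w$ lies in the span of $H_kX_r^+w$ and $X_r^+H_kw$, which is no constraint at all absent control of $H_k$ on $V_{\mu+2}$.
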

It is well known to the Lie theorists that every weight space of any simple weight $ \mathfrak{sl}_{2}$-module is one-dimensional. However, the condition in Theorem \ref{thm1}  that $V$ admits a one-dimensional weight space  is essential. In Section 5 we construct a class of simple weight modules whose every weight space is uniformly two dimensional. 

In this paper, denote the set of natural numbers, the set of positive integers and the set of complex numbers by $\mathbb{N}$, $\mathbb{Z}_{>0}$ and $\mathbb{C}$, respectively. 

\section{Preliminary}\label{sec:1}
%The classification of simple weight modules is an interesting and challenging question in representation theory of classical simple Lie algebra and quantum groups.  The finite-dimensional simple modules of them are highest weight modules, which form a proper category of simple weight module. A complete description of simple weight modules for finite-dimensional simple Lie algebras was given in [Ma] by O. Mathieu, which was highly praised by many experts [La, *]. The simple weight modules for $\mathfrak{sl}_{2}$ will be used in this paper, which is listed below.
In this section, we recall the definition of the Yangian $\operatorname{Y}(\mathfrak{sl}_{2})$ and some results concerning its finite-dimensional representations. For the general definition of the Yangian $\operatorname{Y}(\mathfrak{g})$, we refer the articles \cite{VMFY, ChPr4}.

\begin{definition}
 The Yangian $\operatorname{Y}(\mathfrak{sl}_2)$ 
 is an associative algebra with generators $X_{k}^{\pm}$, $H_{k}$, $k \in \mathbb{N},$
 and the following defining relations:
 \begin{equation*}
   \left[H_{k}, H_{l}\right]=0, \quad\left[H_{0}, X_{k}^{\pm}\right]=\pm 2 X_{k}^{\pm}, \quad\left[X_{k}^{+}, X_{l}^{-}\right]=H_{k+l},
  \end{equation*}
 \begin{equation}\label{e11}
   \left[H_{k+1}, X_{l}^{\pm}\right]-\left[H_{k}, X_{l+1}^{\pm}\right]=\pm\left(H_{k} X_{l}^{\pm}+X_{l}^{\pm} H_{k}\right) ,
  \end{equation}	
 \begin{equation}\label{e12}
 \left[X_{k+1}^{\pm}, X_{l}^{\pm}\right]-\left[X_{k}^{\pm}, X_{l+1}^{\pm}\right]=\pm\left(X_{k}^{\pm} X_{l}^{\pm}+X_{l}^{\pm} X_{k}^{\pm}\right) .
  \end{equation}
\end{definition}

Yangian $\operatorname{Y}(\mathfrak{sl}_{2})$ is a hopf algebra. However, there is no explicit coproduct formula for the realization used in this paper. Partial information were obtained in the paper \cite{ChPr5}, which is enough for ours purpose.

\begin{lemma}[\cite{ChPr5}] 
The coproduct $\Delta$ of $\operatorname{Y}(\mathfrak{sl}_{2})$ satisfies:
\begin{enumerate}[label=(\arabic*)]
 \item $\Delta\left(H_{0}\right)=H_{0} \otimes 1+1 \otimes H_{0}$,
 \item $\Delta\left(H_{1}\right)=H_{1} \otimes 1+H_{0} \otimes H_{0}+1 \otimes H_{1}-2 X_{0}^{-} \otimes X_{0}^{+} $,
 \item $\Delta\left(X_{0}^{+}\right)=X_{0}^{+} \otimes 1+1 \otimes X_{0}^{+}$,
 \item $\Delta\left(X_{1}^{+}\right)=X_{1}^{+} \otimes 1+1 \otimes X_{1}^{+}+H_{0} \otimes X_{0}^{+}$,
 \item $\Delta\left(X_{0}^{-}\right)=X_{0}^{-} \otimes 1+1 \otimes X_{0}^{-}$,
 \item $\Delta\left(X_{1}^{-}\right)=X_{1}^{-} \otimes 1+1 \otimes X_{1}^{-}+X_{0}^{-} \otimes H_{0}.$
\end{enumerate} 
\end{lemma}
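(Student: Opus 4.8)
\section*{Proof proposal}

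The plan is to exploit that the coproduct $\Delta$ is a \emph{fixed} algebra homomorphism $\operatorname{Y}(\mathfrak{sl}_2)\to \operatorname{Y}(\mathfrak{sl}_2)\otimes\operatorname{Y}(\mathfrak{sl}_2)$ coming from the (known) Hopf structure, so it suffices to evaluate it on the generators $H_0,H_1,X_0^\pm,X_1^\pm$ and then simplify using the defining relations. Formulas $(1),(3),(5)$ are essentially free: the elements $H_0,X_0^\pm$ span a copy of $\mathfrak{sl}_2$ inside $\operatorname{Y}(\mathfrak{sl}_2)$ and generate a sub-Hopf-algebra isomorphic to $U(\mathfrak{sl}_2)$ with its standard cocommutative structure, on which every Chevalley generator is primitive; this gives $\Delta(H_0)=H_0\otimes1+1\otimes H_0$ and $\Delta(X_0^\pm)=X_0^\pm\otimes1+1\otimes X_0^\pm$. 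The real content is $\Delta(X_1^\pm)$ and $\Delta(H_1)$, which I would determine in stages.

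First I would constrain the \emph{shape} of $\Delta(X_1^\pm)$. Acting by $\operatorname{ad}\bigl(\Delta(H_0)\bigr)$ and using $[H_0,X_1^\pm]=\pm2X_1^\pm$, the element $\Delta(X_1^+)$ must be homogeneous of weight $+2$; and using the natural filtration on $\operatorname{Y}(\mathfrak{sl}_2)$ whose associated graded is $U(\mathfrak{sl}_2[t])$ (under which $\deg X_k^\pm=\deg H_k=k$ and the degree-one generators are primitive), the leading term is the primitive one and the correction lies in filtration degree $\le 0$, i.e.\ is built from $H_0,X_0^\pm$. Together these force the ansatz
\begin{equation*}
\Delta(X_1^+)=X_1^+\otimes1+1\otimes X_1^++a\,H_0\otimes X_0^++b\,X_0^+\otimes H_0,
\end{equation*}
and symmetrically $\Delta(X_1^-)=X_1^-\otimes1+1\otimes X_1^-+c\,X_0^-\otimes H_0+d\,H_0\otimes X_0^-$ for scalars $a,b,c,d$. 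Applying $\Delta$ to the relation $[X_1^+,X_0^+]=(X_0^+)^2$ (the case $k=l=0$ of \eqref{e12}) and matching the coefficient of $X_0^+\otimes X_0^+$ gives $a+b=1$, and likewise $c+d=1$ from the $X_1^-$ analogue.

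Next I would pin down the cross-coefficients and obtain $(2)$ simultaneously. Since $H_1=[X_0^+,X_1^-]=[X_1^+,X_0^-]$ (from $[X_k^+,X_l^-]=H_{k+l}$), I would expand $\Delta(H_1)$ in \emph{both} ways, using the $\mathfrak{sl}_2$ brackets $[X_0^+,X_0^-]=H_0$ and $[H_0,X_0^\pm]=\pm2X_0^\pm$. The two expansions read
\begin{equation*}
\Delta(H_1)=H_1\otimes1+1\otimes H_1+(c+d)H_0\otimes H_0-2c\,X_0^-\otimes X_0^+-2d\,X_0^+\otimes X_0^-
\end{equation*}
and the same with $(c,d)$ replaced by $(a,b)$; equating them forces $a=c$ and $b=d$, and with $a+b=1$ produces the required $H_0\otimes H_0$ term. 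Thus all six formulas will follow once a single coefficient, say $a$, is fixed to the value $1$.

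The main obstacle is exactly this last normalization: I expect (and a check of \eqref{e11} with $k=l=0$ confirms) that \emph{all} the elementary relations used above are invariant under the flip $\tau\circ\Delta$, so they determine $\Delta$ only up to the opposite coproduct, i.e.\ they leave the free parameter $a$ (with $b=1-a$) undetermined. To select the stated Hopf structure $a=c=1$, $b=d=0$ of \cite{ChPr5} one must feed in extra input that breaks the flip symmetry. The cleanest way I would do this is to pass to the RTT realization, where $\Delta\bigl(t_{ij}(u)\bigr)=\sum_k t_{ik}(u)\otimes t_{kj}(u)$ and the Gauss decomposition expresses $X_1^\pm,H_1$ through the $t_{ij}^{(r)}$, so the matrix-coproduct directly yields the asymmetric term $H_0\otimes X_0^+$ rather than $X_0^+\otimes H_0$; equivalently, one invokes Drinfeld's $J$-presentation, where $\Delta(J(x))=J(x)\otimes1+1\otimes J(x)+[x\otimes1,\Omega]$ with the Casimir two-tensor $\Omega=\tfrac12 H_0\otimes H_0+X_0^+\otimes X_0^-+X_0^-\otimes X_0^+$, and the non-symmetric bracket $[x\otimes1,\Omega]$ fixes the orientation. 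Back-substituting $a=c=1$ into the expansions above then gives precisely $(2),(4),(6)$, and I would finish with a consistency check that the resulting $\Delta$ respects coassociativity and the remaining cases of \eqref{e11}--\eqref{e12}.
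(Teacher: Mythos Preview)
The paper does not prove this lemma at all: it is quoted verbatim from \cite{ChPr5} and stated without proof, as the bracketed citation in the lemma header indicates. There is therefore no ``paper's own proof'' to compare your proposal against.

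Your sketch is a sensible reconstruction of how such formulas are obtained, and you correctly identify the one nontrivial point: the defining relations alone only determine $\Delta$ up to the flip, so an external input (the RTT coproduct or Drinfeld's $J$-presentation) is needed to break the symmetry and select $a=c=1$, $b=d=0$. That is exactly why the paper defers to \cite{ChPr5} rather than proving it in place. If you want this to be a self-contained proof, you would need to actually carry out the RTT/Gauss-decomposition computation or the $J$-presentation identification rather than just gesture at it; as written, the proposal is an outline that ends precisely where the real work begins.
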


\begin{corollary}
 The generators $X_{0}^{\pm}$ and $H_{0}$ generate a subalgebra which is isomorphic to  $\mathfrak{sl}_{2}$.
\end{corollary}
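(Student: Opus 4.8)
The plan is to exhibit a surjection $U(\mathfrak{sl}_2)\twoheadrightarrow A$, where $A\subseteq\operatorname{Y}(\mathfrak{sl}_2)$ denotes the subalgebra generated by $X_0^{+},X_0^{-},H_0$ (so the assertion is that $A$ is isomorphic to the enveloping algebra $U(\mathfrak{sl}_2)$), and then to show this surjection is injective. First I would specialize the defining relations to $k=l=0$: this gives $[H_0,H_0]=0$, $[H_0,X_0^{\pm}]=\pm2X_0^{\pm}$ and $[X_0^{+},X_0^{-}]=H_0$, which are exactly the Chevalley relations of $\mathfrak{sl}_2$ under $e\mapsto X_0^{+}$, $f\mapsto X_0^{-}$, $h\mapsto H_0$. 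Since $U(\mathfrak{sl}_2)$ is the associative algebra on generators $e,f,h$ subject to precisely these relations, its universal property produces an algebra homomorphism $\varphi\colon U(\mathfrak{sl}_2)\to\operatorname{Y}(\mathfrak{sl}_2)$ with $\varphi(e)=X_0^{+}$, $\varphi(f)=X_0^{-}$, $\varphi(h)=H_0$; its image is $A$, so $\varphi\colon U(\mathfrak{sl}_2)\twoheadrightarrow A$ is surjective. Note that the higher relations \eqref{e11} and \eqref{e12} involve $H_1$ and $X_1^{\pm}$ and are not needed for the existence of $\varphi$.

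It remains to show $\varphi$ is injective, i.e.\ that no nontrivial polynomial identity holds among $X_0^{+},X_0^{-},H_0$ inside $\operatorname{Y}(\mathfrak{sl}_2)$. I would obtain this by constructing a left inverse via the evaluation homomorphism: there is an algebra homomorphism $\operatorname{ev}\colon\operatorname{Y}(\mathfrak{sl}_2)\to U(\mathfrak{sl}_2)$, whose well-definedness is a direct check against the defining relations \eqref{e11} and \eqref{e12}, with $\operatorname{ev}(X_0^{+})=e$, $\operatorname{ev}(X_0^{-})=f$, $\operatorname{ev}(H_0)=h$ (and $X_1^{\pm},H_1$ sent to suitable elements of $U(\mathfrak{sl}_2)$). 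Then $\operatorname{ev}\circ\varphi$ fixes each of $e,f,h$, hence equals $\operatorname{id}_{U(\mathfrak{sl}_2)}$, so $\varphi$ is injective, and therefore an isomorphism onto $A$. Alternatively one may invoke the PBW theorem for $\operatorname{Y}(\mathfrak{sl}_2)$: ordered monomials in the $X_k^{-},H_k,X_k^{+}$ with $k\ge0$ form a basis, so the ordered monomials in $X_0^{-},H_0,X_0^{+}$ alone are linearly independent; since these are precisely the $\varphi$-images of a PBW basis of $U(\mathfrak{sl}_2)$, $\varphi$ is injective.

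The surjection is immediate, so the content of the argument is the injectivity step, and the main obstacle is that it cannot be derived from the Yangian relations by formal manipulation — $U(\mathfrak{sl}_2)$ has many proper quotients — so one genuinely needs external input. I expect the evaluation-homomorphism route to be the cleaner one to write out, the only real work being the routine verification that the chosen assignment for $\operatorname{ev}$ is compatible with all three families of defining relations.
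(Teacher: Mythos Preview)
Your argument is correct. The paper, however, states this corollary without proof, evidently regarding it as immediate from the $k=l=0$ defining relations together with standard facts about $\operatorname{Y}(\mathfrak{sl}_2)$. What you have done is make explicit the one nontrivial point the paper leaves implicit: while the surjection $U(\mathfrak{sl}_2)\twoheadrightarrow A$ is automatic, injectivity cannot be read off the relations alone and requires either the evaluation homomorphism or the PBW theorem for Yangians. Both of your proposed routes are standard; the paper itself invokes the evaluation homomorphism elsewhere (in the introduction and just before Lemma~\ref{wra}), so that approach sits most naturally within its framework.
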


\begin{lemma}[\cite{Guay,Leve}] \label{lem5}
$\operatorname{Y}(\mathfrak{sl}_{2})$ is generated by $X_{0}^{\pm}$, $H_{0}$ and $H_{1}$.
\end{lemma}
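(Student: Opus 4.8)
The plan is to verify the two nontrivial relations directly, since the relations $[H_k,H_l]=0$, $[H_0,X_k^\pm]=\pm 2X_k^\pm$ and $[X_k^+,X_l^-]=H_{k+l}$ involve only generators built from $X_0^\pm,H_0,H_1$ in a way that is easy to track, and the content of the lemma is that the higher generators $X_k^\pm$ and $H_k$ for $k\geq 2$ can be \emph{recovered} from $X_0^\pm,H_0,H_1$. First I would use relation \eqref{e11} with $k=0$: it gives $[H_1,X_l^\pm]-[H_0,X_{l+1}^\pm]=\pm(H_0X_l^\pm+X_l^\pm H_0)$, and since $[H_0,X_{l+1}^\pm]=\pm 2X_{l+1}^\pm$, this can be solved to express $X_{l+1}^\pm$ in terms of $X_l^\pm$, $H_0$, $H_1$:
\[
X_{l+1}^{\pm}=\pm\tfrac12\bigl([H_1,X_l^{\pm}]\mp(H_0X_l^{\pm}+X_l^{\pm}H_0)\bigr)\cdot(\mp\tfrac12)^{-1}\ \text{up to sign},
\]
more precisely $2X_{l+1}^{\pm}=\mp\bigl[H_0,X_{l+1}^{\pm}\bigr]$ rearranged so that $X_{l+1}^\pm=\tfrac12\bigl(\pm[H_1,X_l^\pm]-(H_0X_l^\pm+X_l^\pm H_0)\bigr)$ after moving the $[H_0,X_{l+1}^\pm]$ term. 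Inducting on $l$ starting from the known $X_0^\pm$, this shows every $X_l^\pm$ lies in the subalgebra generated by $X_0^\pm,H_0,H_1$.

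Next, to get the $H_k$ for $k\geq 2$: once all $X_l^\pm$ are available, the relation $[X_k^+,X_l^-]=H_{k+l}$ immediately gives $H_m=[X_1^+,X_{m-1}^-]$ (or $[X_{m}^+,X_0^-]$) for every $m\geq 1$, so the $H_m$ are in the subalgebra as well. Combined with the previous paragraph, the subalgebra generated by $X_0^\pm,H_0,H_1$ contains all the generators $X_k^\pm,H_k$, hence equals $\operatorname{Y}(\mathfrak{sl}_2)$. One should double-check the base-case bookkeeping: $X_1^\pm$ must itself be expressible, which is exactly the $l=0$ instance of the displayed recursion and only uses $X_0^\pm,H_0,H_1$; and $H_1$ is given as a generator, so no circularity arises.

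The main obstacle is not conceptual but notational: one must be careful that the recursion for $X_{l+1}^\pm$ uses \emph{only} $H_0$ and $H_1$ (not higher $H_k$), which is why we specialize \eqref{e11} to $k=0$ rather than general $k$; using general $k$ would reintroduce $H_k$ and break the induction. With that specialization fixed, the argument is a clean two-step induction, and the remaining relations of $\operatorname{Y}(\mathfrak{sl}_2)$ need not be re-examined — they hold automatically since we are working inside the already-constructed algebra, not presenting it anew. Essentially this is the argument of \cite{Guay,Leve}, which I would cite for the full verification.
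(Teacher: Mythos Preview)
Your proposal is correct and follows essentially the same route as the paper: specialize relation~\eqref{e11} to $k=0$, use $[H_0,X_{l+1}^{\pm}]=\pm 2X_{l+1}^{\pm}$ to solve for $X_{l+1}^{\pm}$ in terms of $X_l^{\pm},H_0,H_1$, induct to get all $X_l^{\pm}$, and then recover $H_r$ via $H_r=[X_r^{+},X_0^{-}]$. The only cosmetic difference is that the paper repackages the anticommutator term using $[H_0^2,X_l^{\pm}]=\pm 2(H_0X_l^{\pm}+X_l^{\pm}H_0)$ to write the recursion compactly as $X_{l+1}^{\pm}=\pm\tfrac12[H_1-\tfrac12 H_0^2,\,X_l^{\pm}]$, which is exactly your formula $X_{l+1}^{\pm}=\tfrac12(\pm[H_1,X_l^{\pm}]-(H_0X_l^{\pm}+X_l^{\pm}H_0))$ rewritten; your intermediate displayed line with ``up to sign'' is garbled and should be cleaned up, but the final expression and the argument are sound.
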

\begin{proof} 
It follows from the defining relations of $\operatorname{Y}(\mathfrak{sl}_{2})$ that 
$$[H_0^2, X_k^{\pm}]=H_0[H_0,X_k^{\pm}]+[H_0,X_k^{\pm}]H_0=\pm 2(H_0X_k^{\pm}+X_k^{\pm}H_0).$$
Then we have 
\begin{eqnarray}\label{e13}   
 \left[H_{1}, X_{l}^{\pm}\right]-\left[H_{0}, X_{l+1}^{\pm}\right]	&=& \pm\left(H_{0} X_{l}^{\pm}+X_{l}^{\pm} H_{0}\right)\nonumber\\
   \left[H_{1}, X_{l}^{\pm}\right]\mp 2X_{l+1}^{\pm}&=& \frac{1}{2}[H_0^2, X_l^{\pm}]\nonumber\\
   \mp 2X_{l+1}^{\pm}&=& -\left(\left[H_{1}, X_{l}^{\pm}\right]- \frac{1}{2}[H_0^2, X_l^{\pm}]\right)\nonumber\\
   X_{l+1}^{\pm}&=& \pm \frac{1}{2}\left[H_{1}-\frac{1}{2}H_0^2, X_{l}^{\pm}\right].
\end{eqnarray}
Note that $\left[X_{r}^{+}, X_{0}^{-}\right]=H_{r}$. Thus $\operatorname{Y}(\mathfrak{sl}_2)$ is generated by $X_0^{\pm}$, $H_{0}$ and $H_1$.
\end{proof}

In next we introduce the representation theory of $\operatorname{Y}(\mathfrak{sl}_{2})$.

\begin{definition}
 A module $V(\mu(u))$ of $\operatorname{Y}(\mathfrak{sl}_{2})$ is said to be highest weight if there exists a vector $v^{+}$ such that 
 \begin{center}
   $V(\mu(u))=\operatorname{Y}(\mathfrak{sl}_{2}).v^{+}$, $X_{k}^{+}.v^{+}= 0$ and $H_{k}.v^{+} =\mu_{k} v^{+}$,
 \end{center}
 where $\mu(u)=1+\mu_{0}u^{-1}+\mu_{1}u^{-2}+\ldots$ is a formal series in $u^{-1}$.
 \end{definition}

The Verma module $M(\mu(u))$ of $\operatorname{Y}(\mathfrak{sl}_{2})$ is defined to be the quotient of $\operatorname{Y}(\mathfrak{sl}_{2})$ by the left ideal generated by generators $X_{k}^{+}$ and the elements $H_{k}-\mu_{k}1$. $\operatorname{Y}(\mathfrak{sl}_{2})$ acts on $M(\mu(u))$ by left multiplication. A highest weight vector of $M(\mu(u))$ is $1_{\mu(u)}$ which is the image of the element $1\in \operatorname{Y}(\mathfrak{sl}_{2})$ in the quotient. The highest weight space is one-dimensional.  The Verma module $M(\mu(u))$ is a universal highest weight module in the sense that: if $V(\mu(u))$ is another highest weight module with a highest weight vector $v$, then the mapping $1_{\mu(u)}\mapsto v$ defines a surjective $\operatorname{Y}(\mathfrak{sl}_{2})$-module homomorphism $M\left(\mu(u)\right)\rightarrow V\left(\mu(u)\right)$. The lowest weight modules are defined similarly, and we omit it here.

Every finite-dimensional simple $\operatorname{Y}(\mathfrak{sl}_{2})$-module is a highest weight module \cite{Dr2,YACLA}. Moreover, a simple $\operatorname{Y}(\mathfrak{sl}_{2})$-module $L(\mu(u))$ is finite dimensional if and only if there exists monic polynomial $\pi(u)$ such that $\mu(u)=\frac{\pi(u+1)}{\pi(u)}$, in the sense that the right-hand side is the Laurent expansion of the left-hand side about $u=\infty$.  In the references \cite{ChPr5, ChPr3}, an explicit realization of finite dimensional simple modules for $Y\left(\mathfrak{sl}_2\right)$ is given: every finite dimensional simple module for $Y\left(\mathfrak{sl}_2\right)$ is a tensor product of modules  which are simple under $\mathfrak{sl}_2$. Let $W_m$ be the simple representation of $ \mathfrak{sl}_{2}$ with highest weight $m\in \mathbb{Z}_{\geq 1}$. Pulling it back by evaluation homomorphism $\rho$, $W_m$ becomes to a $\operatorname{Y}(\mathfrak{sl}_{2})$-module. Let $W_m\left(a\right)$ be the simple representation of $\operatorname{Y}(\mathfrak{sl}_{2})$ associated to the Drinfeld polynomial $\pi(u)=(u-a)(u-(a_1+1))\ldots (u-(a+m-1))$.

\begin{lemma}[Proposition 3.5,\cite{ChPr3}]\label{wra}
For any $m\geq 1$, $a\in \mathbb{C}$, $W_m\left(a\right)$ has a basis
$\{w_0,w_1,\ldots, w_m\}$ on which the action of $\operatorname{Y}(\mathfrak{sl}_{2})$ is given by
\begin{center}
$ x_k^+.w_s=\left(s+a\right)^k\left(s+1\right)w_{s+1},\ \ x_k^-.w_s=\left(s+a-1\right)^k\left(m-s+1\right)w_{s-1},$\\

$h_k.w_s=\Big(\left(s+a-1\right)^ks\left(m-s+1\right)-\left(s+a\right)^k\left(s+1\right)\left(m-s\right)\Big)w_s$.
\end{center}
\end{lemma}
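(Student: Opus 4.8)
The plan is to construct the module $W_m(a)$ explicitly on a vector space with basis $\{w_0, w_1, \ldots, w_m\}$, declare the action of the generators $X_k^{\pm}$ and $H_k$ by the stated formulas, and then verify directly that all the defining relations of $\operatorname{Y}(\mathfrak{sl}_2)$ are satisfied. By Lemma \ref{lem5} it suffices in principle to check the relations involving only $X_0^{\pm}, H_0, H_1$, but since the formulas are given uniformly in $k$ it is cleaner to verify the full set of relations; the key structural input is that the action of $X_k^{\pm}$ is obtained from that of $X_0^{\pm}$ by inserting a scalar factor $(s+a)^k$ or $(s+a-1)^k$ depending only on the weight, which is exactly the shape that makes \eqref{e11} and \eqref{e12} collapse to finitely many polynomial identities in $s$.

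First I would record the $\mathfrak{sl}_2$-part: setting $k=0$ gives $X_0^+.w_s = (s+1)w_{s+1}$, $X_0^-.w_s = (m-s+1)w_{s-1}$, $H_0.w_s = (m-2s)w_s$, which is (after a standard rescaling of the basis) the irreducible $(m+1)$-dimensional $\mathfrak{sl}_2$-module $W_m$; this handles $[H_0, X_k^{\pm}] = \pm 2 X_k^{\pm}$ once one checks that $X_k^{\pm}$ shifts weight by $\pm 2$, which is immediate from the index shift $s \mapsto s\pm1$. Next I would verify $[X_k^+, X_l^-] = H_{k+l}$ by applying both sides to $w_s$: the left side produces $(s+a)^k(s+1)(m-s)w_s - (s+a-1)^l(m-s+1) \cdot (\text{coefficient}) w_s$ — wait, one must be careful to pair up the exponents correctly, computing $X_k^+ X_l^-.w_s$ and $X_l^- X_k^+.w_s$ and taking the difference, which should collapse to exactly the stated eigenvalue of $H_{k+l}$ on $w_s$, namely $(s+a-1)^{k+l}s(m-s+1) - (s+a)^{k+l}(s+1)(m-s)$. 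Then $[H_k, X_l^{\pm}] = $ (appropriate expression) and the Drinfeld relations \eqref{e11}, \eqref{e12} reduce, after applying to $w_s$ and cancelling the common vector $w_{s+1}$ (resp.\ $w_{s-1}$, resp.\ $w_s$), to polynomial identities in the single variable $s$ such as $(s+a)^{k+1} - (s+a)^k \cdot(\ldots) = \pm(\ldots)$, each of which follows from elementary algebra of the form $x^{k+1} - x^k y = x^k(x-y)$ with $x = s+a$ and $y$ the relevant shifted value.

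The main obstacle I anticipate is purely bookkeeping: relations \eqref{e12} for $X^{\pm}$ involve a genuine non-commuting pair $X_{k+1}^{\pm} X_l^{\pm}$ versus $X_l^{\pm} X_{k+1}^{\pm}$ acting on $w_s$, so one lands on $w_{s+2}$ (resp.\ $w_{s-2}$) with coefficients that are products of two of the defining scalars evaluated at consecutive indices $s$ and $s+1$; matching the difference of these against the symmetrized right-hand side requires carefully tracking which exponent ($k+1$ or $l$) attaches to which index. Once this indexing is set up consistently, each relation becomes a finite check. I would also note that irreducibility and the identification of the Drinfeld polynomial as $\pi(u) = (u-a)(u-(a+1))\cdots(u-(a+m-1))$ either follow from the highest-weight vector $w_0$ (on which $X_k^+$ acts by $0$ and $H_k$ by the eigenvalue read off from the $h_k.w_0$ formula, whose generating series is $\mu(u) = \pi(u+1)/\pi(u)$) together with the finite-dimensional classification quoted before the lemma, or are simply not re-proved here since the lemma is attributed to \cite{ChPr3}; accordingly the proof in this paper is just the verification that the displayed formulas define a $\operatorname{Y}(\mathfrak{sl}_2)$-action.
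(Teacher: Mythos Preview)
The paper does not prove this lemma at all: it is quoted verbatim as Proposition~3.5 of \cite{ChPr3} and immediately followed by its $m=1$ corollary, with no argument in between. So there is no ``paper's own proof'' to compare your proposal against; your direct-verification approach is a perfectly reasonable way to supply a proof that the paper chose to omit by citation.

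That said, two small slips in your sketch are worth flagging before you write it out in full. First, from the $k=0$ formula one gets $H_0.w_s=\bigl(s(m-s+1)-(s+1)(m-s)\bigr)w_s=(2s-m)w_s$, not $(m-2s)w_s$. Second, and relatedly, the highest weight vector is $w_m$, not $w_0$: indeed $x_k^+.w_0=a^k\cdot 1\cdot w_1\neq 0$ in general, whereas $x_k^+.w_m$ lands in the (nonexistent) $w_{m+1}$ and hence vanishes; this is consistent with Corollary~\ref{slw1a}, where $w_1$ is the highest weight vector. Your identification of the Drinfeld polynomial should therefore be read off from $w_m$ rather than $w_0$. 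None of this affects the validity of the verification strategy, only the labelling.
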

The special case $m=1$ will be used in this paper.
\begin{corollary}\label{slw1a}
 In $W_1\left(a\right)$,
 \begin{equation}\label{Cor3.3}
   H_{k}w_1=a^kw_1,\quad X_{k}^{-}.w_1=a^kw_0, \quad X_{k}^{+}.w_0=a^kw_{1},\quad H_{k}.w_0=-a^kw_0.
 \end{equation}
\end{corollary}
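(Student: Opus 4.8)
The plan is to derive Corollary~\ref{slw1a} as the special case $m=1$ of Lemma~\ref{wra}. First I would record that for $m=1$ the module $W_1(a)$ has basis $\{w_0,w_1\}$, together with the standing convention that $w_s=0$ whenever $s<0$ or $s>m$; in particular $w_{-1}=w_2=0$. This convention is exactly what makes $w_1$ a highest weight vector and $w_0$ a lowest weight vector of the underlying two-dimensional $\mathfrak{sl}_2$-module, and it is the only point at which any care is needed.

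Next I would substitute $s=0$ and $s=1$ into the three formulas of Lemma~\ref{wra} with $m=1$. For the raising generators: $s=0$ gives $X_k^+.w_0=(0+a)^k(0+1)\,w_1=a^kw_1$, while $s=1$ gives $X_k^+.w_1=(1+a)^k\cdot 2\cdot w_2=0$. For the lowering generators: $s=1$ gives $X_k^-.w_1=(1+a-1)^k(1-1+1)\,w_0=a^kw_0$, while $s=0$ gives $X_k^-.w_0=(a-1)^k\cdot 2\cdot w_{-1}=0$. For the Cartan generators: $s=1$ gives $H_k.w_1=\big(a^k\cdot 1\cdot 1-(1+a)^k\cdot 2\cdot 0\big)w_1=a^kw_1$, while $s=0$ gives $H_k.w_0=\big((a-1)^k\cdot 0\cdot 2-a^k\cdot 1\cdot 1\big)w_0=-a^kw_0$. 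Collecting these four identities is precisely \eqref{Cor3.3}.

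I do not expect any real obstacle here: the statement is a direct specialization, and the only thing to watch is the boundary convention above. As an independent sanity check one could instead verify \eqref{Cor3.3} from scratch, starting from the standard $\mathfrak{sl}_2$-action of $X_0^\pm,H_0$ on the two-dimensional module $W_1(a)$ and the value of $H_1$, then using the recursion~\eqref{e13} (which expresses $X_{l+1}^\pm$ through $[H_1-\tfrac12 H_0^2,X_l^\pm]$) together with $H_r=[X_r^+,X_0^-]$ to determine the action of all $X_k^\pm$ and $H_k$; this reproduces the same formulas but is less economical than the specialization.
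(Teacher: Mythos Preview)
Your proposal is correct and matches the paper's approach: the corollary is stated there without proof, immediately after the remark that ``the special case $m=1$ will be used,'' so it is intended as the direct specialization of Lemma~\ref{wra} that you carry out. Your boundary convention $w_{-1}=w_2=0$ and the four substitutions are exactly right.
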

\begin{definition}
  A $\operatorname{Y}(\mathfrak{sl}_{2})$-module $V$ is called a weight module if $V=\bigoplus V_{\mu}$, where $\mu\in \mathbb{C}$ and 
  $V_{\mu}=\{v \in V \mid H_{0}.v=\mu.v\}$.
\end{definition}
 The next lemma follows from the defining relations of the Yangian.

\begin{lemma}
 Let $V$ be a weight module and Let $V=\bigoplus V_{\mu}$. If $v\in V_{\mu}$, then $H_i.v\in V_{\mu}$, $X_{i}^{+}.v\in V_{\mu+2}$ and $X_{i}^{-}.v\in V_{\mu-2}$.	
\end{lemma}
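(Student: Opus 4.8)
The plan is to read everything off the first line of the defining relations, namely $[H_0,H_k]=0$ and $[H_0,X_k^{\pm}]=\pm 2X_k^{\pm}$, combined with the weight hypothesis $H_0.v=\mu v$. In each of the three cases the mechanism is the same: to compute how $H_0$ acts on $Y.v$ with $Y\in\{H_i,X_i^{+},X_i^{-}\}$, I move $H_0$ across $Y$ and collect the commutator term.

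First I would handle $H_i.v$: since $H_0$ and $H_i$ commute, $H_0.(H_i.v)=H_i.(H_0.v)=\mu\,H_i.v$, so $H_i.v\in V_{\mu}$. For $X_i^{+}.v$, using $[H_0,X_i^{+}]=2X_i^{+}$,
\begin{equation*}
 H_0.(X_i^{+}.v)=X_i^{+}.(H_0.v)+[H_0,X_i^{+}].v=\mu\,X_i^{+}.v+2\,X_i^{+}.v=(\mu+2)\,X_i^{+}.v,
\end{equation*}
whence $X_i^{+}.v\in V_{\mu+2}$. The argument for $X_i^{-}$ is word-for-word the same, with $[H_0,X_i^{-}]=-2X_i^{-}$ in place of $[H_0,X_i^{+}]=2X_i^{+}$, yielding $H_0.(X_i^{-}.v)=(\mu-2)\,X_i^{-}.v$, i.e.\ $X_i^{-}.v\in V_{\mu-2}$.

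I do not expect any genuine obstacle here: the claim uses only the $\mathfrak{sl}_2$-type bracket relations between $H_0$ and the generators, and none of the higher relations \eqref{e11}--\eqref{e12}. The one point worth stating carefully is that, by definition of a weight module, $V$ is the direct sum of the eigenspaces $V_{\mu}=\{v\in V\mid H_0.v=\mu v\}$, so ``lies in $V_{\nu}$'' is precisely ``is an $H_0$-eigenvector of eigenvalue $\nu$ (possibly zero)'', which is exactly what the three displayed computations produce.
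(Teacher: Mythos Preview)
Your proof is correct and is exactly the argument the paper has in mind: the paper merely states that the lemma ``follows from the defining relations of the Yangian'' without writing out the details, and your computation using $[H_0,H_k]=0$ and $[H_0,X_k^{\pm}]=\pm 2X_k^{\pm}$ supplies precisely those details.
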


We close this section by paraphrasing Theorem 3.4.1 in \cite{Ma} about the dense module for $\mathfrak{sl}_{2}$.

\begin{lemma}\label{lem3} 
Let $V$ be an simple weight $\mathfrak{sl}_{2}$-module which is neither highest nor lowest weight module. Then $V$ is an infinite-dimensional vector space with a basis $\{\ldots, v_{\mu-4}, v_{\mu-2}, v_{\mu}, v_{\mu+2}, v_{\mu+4}, \ldots\}$. The actions $X_{0}^{\pm}$ and $H_{0}$ on $V$ are defined as follows:
$$\begin{aligned} 
 X_0^{-}.\left(v_{\mu+2k}\right) &=v_{\mu+2k-2}, \\ 
 X_0^{+}.\left(v_{\mu+2k}\right) &=a_{\mu+2k} v_{\mu+2k+2},\\
 H_0.\left(v_{\mu+2k}\right) &=(\mu+2k) v_{\mu+2k}. 
\end{aligned}$$
Where $a_{\mu+2k}=\frac{1}{4}\left(\tau-(\mu+2k+1)^{2}\right)\neq 0$ for $\tau\in \mathbb{C}$ and all $k\in \mathbb{Z}$.
\end{lemma}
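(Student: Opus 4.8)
The plan is to reduce everything to the classical structure theory of $\mathfrak{sl}_2$-modules. Write $e=X_0^{+}$, $f=X_0^{-}$, $h=H_0$; these span a copy of $\mathfrak{sl}_2$ acting on $V$, and the hypotheses say exactly that $V$ is a simple weight $\mathfrak{sl}_2$-module which is neither a highest nor a lowest weight module. Since $e$ and $f$ shift weights by $\pm 2$ and $V$ is simple, all weights lie in a single coset $\mu+2\mathbb{Z}$. Moreover $V=U(\mathfrak{sl}_2).v$ for any nonzero weight vector $v$, so $V$ has countable dimension; hence the form of Schur's lemma valid over $\mathbb{C}$ applies, and the Casimir $C=ef+fe+\tfrac12 h^{2}\in Z(U(\mathfrak{sl}_2))$ acts on $V$ as a scalar $c$. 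Combining $C=c$ with $ef-fe=h$ shows that on every weight space $V_{\lambda}$ the operators $ef$ and $fe$ act by the scalars $\tfrac12\bigl(c+\lambda-\tfrac12\lambda^{2}\bigr)$ and $\tfrac12\bigl(c-\lambda-\tfrac12\lambda^{2}\bigr)$ respectively.

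First I would show every nonzero weight space is one-dimensional. Fix $0\neq v\in V_{\lambda}$; by simplicity $V=U(\mathfrak{sl}_2).v$, and the PBW theorem gives $V=\operatorname{span}\{f^{a}e^{b}.v:a,b\in\mathbb{N}\}$. For a fixed weight $\nu=\lambda+2n$ only the monomials with $b-a=n$ contribute, and the identity $f^{a+1}e^{b+1}.v=f^{a}(fe)e^{b}.v$ together with the scalarity of $fe$ on $V_{\lambda+2b}$ lets one collapse all of them, by induction on $a$, to a scalar multiple of $e^{n}.v$ (when $n\ge 0$) or of $f^{-n}.v$ (when $n<0$); hence $\dim V_{\nu}\le 1$. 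Next I would show the weight set has no endpoint and no gap. If some nonzero $V_{\mu+2k_{0}}$ had $V_{\mu+2k_{0}+2}=0$, then $e$ would kill a nonzero vector of $V_{\mu+2k_{0}}$, which would therefore generate a highest weight submodule; by simplicity this submodule is all of $V$, so $V$ would be a highest weight module when $\mu+2k_{0}$ is the top weight, and otherwise $V=U(\mathfrak{sl}_2).v$ would have top weight $\mu+2k_{0}$, contradicting the existence of higher weights. The symmetric argument with $f$ excludes a minimal weight and a gap on the other side. Hence the weight set is exactly $\mu+2\mathbb{Z}$ and each $V_{\mu+2k}$ is one-dimensional.

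Finally I would fix a basis and compute the action. By the previous step $f\colon V_{\mu+2k}\to V_{\mu+2k-2}$ is an isomorphism for all $k$. Choose $0\neq v_{\mu}\in V_{\mu}$, put $v_{\mu-2k}=f^{k}.v_{\mu}$ for $k\ge 1$, and for $k\ge 1$ define $v_{\mu+2k}$ inductively as the unique vector of $V_{\mu+2k}$ with $f.v_{\mu+2k}=v_{\mu+2k-2}$; then $\{v_{\mu+2k}:k\in\mathbb{Z}\}$ is a basis with $f.v_{\mu+2k}=v_{\mu+2k-2}$ and $h.v_{\mu+2k}=(\mu+2k)v_{\mu+2k}$. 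Writing $e.v_{\mu+2k}=a_{\mu+2k}v_{\mu+2k+2}$ and applying $[e,f]=h$ to $v_{\mu+2k}$ gives the recursion $a_{\mu+2k-2}-a_{\mu+2k}=\mu+2k$, whose solutions are exactly $a_{\mu+2k}=\tfrac14\bigl(\tau-(\mu+2k+1)^{2}\bigr)$ for a constant $\tau\in\mathbb{C}$ (a reparametrization of the Casimir eigenvalue $c$). If $a_{\mu+2k}=0$ for some $k$, then $v_{\mu+2k}$ would be a highest weight vector generating $V$, again forcing $V$ to be highest weight; so $\tau\neq(\mu+2k+1)^{2}$ for every $k\in\mathbb{Z}$. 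This yields the stated description. The main obstacle is the middle step, pinning down that the weight spaces are one-dimensional and that there are no gaps; this is exactly where simplicity and the scalarity of the Casimir must be used, while the rest is routine manipulation of the relation $[e,f]=h$.
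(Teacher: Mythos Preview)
The paper does not prove this lemma at all: it is stated as a paraphrase of Theorem~3.4.1 in Mazorchuk's book \emph{Lectures on $\mathfrak{sl}_2(\mathbb{C})$-modules} and left without argument. Your proposal therefore supplies a proof where the paper offers only a citation, and the argument you outline is correct and essentially the standard one found in that reference: invoke Dixmier's form of Schur's lemma (countable dimension over $\mathbb{C}$) to make the Casimir act as a scalar, use this scalarity to collapse PBW monomials and conclude each weight space is one-dimensional, rule out endpoints and gaps in the weight string by simplicity and the non-highest/non-lowest hypothesis, and finally normalise a basis along the $f$-action so that the $[e,f]=h$ relation yields the recursion $a_{\mu+2k-2}-a_{\mu+2k}=\mu+2k$, whose general solution is the stated $a_{\mu+2k}=\tfrac14\bigl(\tau-(\mu+2k+1)^2\bigr)$.

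One small remark on exposition: in your ``no endpoint and no gap'' step the two cases you separate are really the same contradiction. If $e$ kills a nonzero $v\in V_{\mu+2k_0}$, then by simplicity $V=U(\mathfrak{sl}_2).v$ is a highest weight module with highest weight $\mu+2k_0$, contradicting the hypothesis directly; there is no need to distinguish whether $\mu+2k_0$ is ``the top weight'' or not, since the existence of any higher weight is already incompatible with $V$ being generated by a vector killed by $e$. Streamlining this sentence would make the middle step cleaner, but the logic is sound.
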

\section{Dense Modules for $\operatorname{Y}(\mathfrak{sl}_{2})$}

As mentioned in the introduction, a dense module $V$ of $\mathfrak{sl}_{2}$ is a $\operatorname{Y}(\mathfrak{sl}_{2})$-module. By Lemma \ref{lem5}, $\operatorname{Y}(\mathfrak{sl}_{2})$ is generated by $X_{0}^{\pm}$, $H_{0}$ and $H_{1}$. It follows from Lemma \ref{lem3} that the action $H_{1}$ on weight vectors $v_{\mu+2k}$ will totally determine the structure of the $\operatorname{Y}(\mathfrak{sl}_{2})$-module $V$. Suppose that $H_{1}.v_{\mu+2k}=b_{\mu+2k}v_{\mu+2k}$, where $b_{\mu+2k}\in \mathbb{C}$. 
\begin{proposition} Suppose that $\mu\in (0,2]$.
$$b_{\mu+2k} =\frac{\left(\mu+2k\right)\left(a_{u}+b_{\mu}\right)}{\mu}+k\left(\mu+2k\right)-a_{\mu+2k},\qquad k\in \mathbb{Z}$$ 
\end{proposition}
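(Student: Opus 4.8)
The plan is to determine the scalars $b_{\mu+2k}$ by exploiting the defining relation \eqref{e13}, namely $X_{l+1}^{\pm}=\pm\frac12\left[H_1-\frac12 H_0^2, X_l^{\pm}\right]$, together with the commutation relation $\left[X_1^+, X_0^-\right]=H_1$. Since the dense module $V$ is irreducible as an $\mathfrak{sl}_2$-module, once we know the action of $H_0, X_0^{\pm}$ (given by Lemma \ref{lem3}) and the single operator $H_1$, Lemma \ref{lem5} guarantees the whole $\operatorname{Y}(\mathfrak{sl}_2)$-action is pinned down; but the Yangian relations \eqref{e11}, \eqref{e12} impose constraints linking the $b_{\mu+2k}$ to the $a_{\mu+2k}$. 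First I would compute $X_1^+ . v_{\mu+2k}$: applying $\left[H_1-\frac12 H_0^2, X_0^+\right]$ to $v_{\mu+2k}$ and using $H_1 . v_{\mu+2k} = b_{\mu+2k} v_{\mu+2k}$, $X_0^+ . v_{\mu+2k} = a_{\mu+2k} v_{\mu+2k+2}$, $H_0 . v_{\mu+2k} = (\mu+2k) v_{\mu+2k}$, I get $X_1^+ . v_{\mu+2k} = \frac12\left(b_{\mu+2k+2} - b_{\mu+2k} - \frac12(\mu+2k+2)^2 + \frac12(\mu+2k)^2\right) a_{\mu+2k} v_{\mu+2k+2}$, which simplifies to a scalar of the form $c_{\mu+2k}\, a_{\mu+2k}\, v_{\mu+2k+2}$ with $c_{\mu+2k}=\frac12(b_{\mu+2k+2}-b_{\mu+2k}) - (\mu+2k+1)$.

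Next I would apply the relation $H_1 = \left[X_1^+, X_0^-\right]$ to $v_{\mu+2k}$. Using $X_0^- . v_{\mu+2k+2} = v_{\mu+2k}$ and $X_0^- . v_{\mu+2k} = v_{\mu+2k-2}$, I compute $X_1^+ X_0^- . v_{\mu+2k} = X_1^+ . v_{\mu+2k-2} = c_{\mu+2k-2} a_{\mu+2k-2} v_{\mu+2k}$ and $X_0^- X_1^+ . v_{\mu+2k} = c_{\mu+2k} a_{\mu+2k} v_{\mu+2k}$, whence $b_{\mu+2k} = c_{\mu+2k-2} a_{\mu+2k-2} - c_{\mu+2k} a_{\mu+2k}$. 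Substituting the expression for $c$ in terms of $b$, this becomes a two-step recursion relating $b_{\mu+2k+2}, b_{\mu+2k}, b_{\mu+2k-2}$ and the known quantities $a_{\mu+2k}, a_{\mu+2k-2}$. I would then also extract the genuinely new constraint by using relation \eqref{e12} with $k=l=0$ for the $+$ sign, i.e. $\left[X_1^+, X_0^+\right] = X_0^+ X_0^+ + X_0^+ X_0^+ = 2(X_0^+)^2$ — wait, more precisely $\left[X_1^+,X_0^+\right]-\left[X_0^+,X_1^+\right]=2\left[X_1^+,X_0^+\right]$ should equal $(X_0^+)^2+(X_0^+)^2$ after the appropriate reading; applying this to $v_{\mu+2k}$ gives an additional relation among the $c$'s and $a$'s that forces the solution to be unique once the "initial data" $b_\mu$ is fixed.

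Having reduced everything to a linear recursion in $k$ with the inhomogeneous term built from $a_{\mu+2k}=\frac14(\tau-(\mu+2k+1)^2)$, I would solve it explicitly. The homogeneous part contributes the term proportional to $\mu+2k$ (reflecting the one free boundary constant $b_\mu$, normalized through $\frac{(\mu+2k)(a_\mu+b_\mu)}{\mu}$), and a particular solution accounts for the polynomial-in-$k$ pieces $k(\mu+2k)$ and $-a_{\mu+2k}$. Concretely I would verify by direct substitution that $b_{\mu+2k} = \frac{(\mu+2k)(a_\mu+b_\mu)}{\mu} + k(\mu+2k) - a_{\mu+2k}$ satisfies the recursion and matches at $k=0$ (where it gives $b_\mu + 0 - a_\mu + a_\mu$... so one must be careful: at $k=0$ the formula reads $\frac{\mu(a_\mu+b_\mu)}{\mu} + 0 - a_\mu = b_\mu$, consistent). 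The hypothesis $\mu\in(0,2]$ simply ensures $\mu\neq 0$ so the normalization is well-defined and selects one representative weight in each coset mod $2$.

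The main obstacle I anticipate is bookkeeping: correctly deriving the recursion from \emph{all} the relevant Yangian relations (not just $H_1=\left[X_1^+,X_0^-\right]$ but also the Serre-type relation \eqref{e12} and possibly \eqref{e11} with higher indices) to be sure the constraints are consistent and that no further conditions on $\tau$ or $b_\mu$ are secretly imposed — i.e., showing the recursion is exactly determined, not over- or under-determined. A secondary subtlety is handling the action of $X_1^-$ symmetrically (via $X_1^- . v_{\mu+2k} = -\frac12\left[H_1-\frac12 H_0^2, X_0^-\right] . v_{\mu+2k}$) and checking the mixed relations $\left[H_1, X_0^\pm\right]$ and $\left[X_1^+, X_1^-\right]=H_2$ remain consistent, which I would relegate to a remark or a short verification rather than belabor in the main argument.
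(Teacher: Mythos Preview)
Your outline follows essentially the same route as the paper: compute $X_1^{\pm}$ on weight vectors via \eqref{e13}, feed this into $H_1=[X_1^+,X_0^-]$ to obtain a three-term relation among $b_{\mu+2k-2},b_{\mu+2k},b_{\mu+2k+2}$, and combine with the second-difference relation coming from \eqref{e12} (the paper uses the minus sign and gets $b_{\mu+2k}-2b_{\mu+2k-2}+b_{\mu+2k-4}=6$) to reduce to a first-order recursion that telescopes to the stated formula. Your recursion $b_{\mu+2k}=c_{\mu+2k-2}a_{\mu+2k-2}-c_{\mu+2k}a_{\mu+2k}$ is exactly the paper's intermediate identity, so the algebra is right.

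There is, however, a genuine gap in your treatment of the hypothesis $\mu\in(0,2]$. You say it ``simply ensures $\mu\neq 0$ so the normalization is well-defined,'' but that is not the whole story. After combining the two relations, the paper arrives at
\[
(\mu+2k)\bigl(b_{\mu+2k-2}+a_{\mu+2k-2}\bigr)=(\mu+2k-2)\bigl(b_{\mu+2k}+a_{\mu+2k}-\mu-2k\bigr),
\]
and to telescope one divides by $(\mu+2k)(\mu+2k-2)$. When $\mu\in(0,2)$ no weight $\mu+2k$ vanishes and the telescoping goes through directly. But when $\mu=2$ the weight $0$ occurs (at $k=-1$), and the recursion degenerates there: setting $k=0$ forces $b_0=-a_0$ outright, and one cannot simply propagate the recursion across $k=-1$. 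The paper handles this as a separate case, using the second-difference relation to compute $b_{-2}$ from $b_2$ and $b_0$, then checking by hand that the telescoped relation $\tfrac{b_{-2}+a_{-2}}{-2}=\tfrac{b_2+a_2}{2}-2$ still holds, so that the closed formula remains valid on both sides of the singular index. Your proposal to ``verify by direct substitution'' would establish existence, but you also need uniqueness, and at $\mu=2$ the first-order recursion alone does not link the $k\ge 0$ and $k\le -1$ ranges; the second-difference identity is what bridges them. You should make this explicit rather than folding it into a remark.
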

\begin{proof} 
Let $v_{\mu+2k}$ be a weight vector with $\mu\neq 0$.
We first show 
\begin{equation}\label{e18}
  b_{\mu+2k}-2b_{\mu+2k-2}+b_{\mu+2k-4}=6
\end{equation}
for all $k\in \mathbb{Z}$. 
It follows from (\ref{e13}) that \begin{equation*}
\begin{aligned}
X_1^{-}.v_{\mu+2k}&=-\frac{1}{2}\left[H_{1}-\frac{1}{2}H_0^2, X_{0}^{-}\right].v_{\mu+2k}=-\frac{1}{2}\left(b_{\mu+2k-2}-b_{\mu+2k}+2\mu+4k-2\right)v_{\mu+2k-2},\\
X_1^{+}.v_{\mu+2k} &=\frac{1}{2}\left[H_{1}-\frac{1}{2}H_0^2, X_{0}^{+}\right].v_{\mu+2k}=\frac{a_{\mu}}{2}\left(b_{\mu+2}-b_{\mu}-2\mu-4k-2\right)v_{\mu+2k+2}.
\end{aligned}
\end{equation*}
By the defining relation (\ref{e12}), $$\left(\left[X_{1}^{-}, X_{0}^{-}\right]-\left[X_{0}^{-}, X_{1}^{-}\right]\right).v_{\mu+2k}=-\left(X_{0}^{-} X_{0}^{-}+X_{0}^{-} X_{0}^{-}\right).v_{\mu+2k},$$ one can easily obtain that 
\begin{equation*}
\begin{aligned}
  -\frac{1}{2}(b_{\mu+2k-4}-b_{\mu+2k-2}& +2\mu+4k-6).v_{\mu+2k-4}\\
  &+\frac{1}{2}\left(b_{\mu+2k-2}-b_{\mu+2k}+2\mu+4k-2\right).v_{\mu+2k-4}=-v_{\mu+2k-4}
\end{aligned}
\end{equation*}
Comparing the coefficient of $v_{\mu+2k-4}$ we have $b_{\mu+2k}-2b_{\mu+2k-2}+b_{\mu+2k-4}=6$.

We next show that $b_{\mu+2k}=-a_{\mu+2k}+\frac{\mu+2k}{2}(b_{\mu+2k}-b_{\mu+2k-2}-2 \mu-4k+2)$. 
\begin{eqnarray*}
  H_{1} .v_{\mu+2k} &=&\left[X_{1}^{+}, X_{0}^{-}\right] . v_{\mu+2k}, \\
  b_{\mu+2k} v_{\mu+2k} &=&X_{1}^{+} X_{0}^{-} .v_{\mu+2k}-X_{0}^{-} X_{1}^{+}. v_{\mu+2k} ,\\
  b_{\mu+2k} v_{\mu+2k} &=&\frac{a_{\mu+2k-2}}{2}\left(b_{\mu+2k}-b_{\mu+2k-2}-2 \mu-4k+2\right) v_{\mu+2k}\\
  &-&\frac{a_{\mu+2k}}{2}\left(b_{\mu+2k+2}-b_{\mu+2k}-2 \mu-4k-2\right) v_{\mu+2k}.
\end{eqnarray*}
Recall that $a_{\mu+2k-2}=a_{\mu+2k}+\mu+2k$. Thus

\begin{eqnarray*}
 b_{\mu+2k} &=&\frac{a_{\mu+2k}+\mu+2k}{2}\left(b_{\mu+2k}-b_{\mu+2k-2}-2 \mu-4k+2\right)\\
&&-\frac{a_{\mu+2k}}{2}\left(b_{\mu+2k+2}-b_{\mu+2k}-2 \mu-4k-2\right)\\
&=&\frac{a_{\mu+2k}}{2}\left(-b_{\mu+2k+2}+2b_{\mu+2k}-b_{\mu+2k-2}+4\right)\\
&&+\frac{\mu+2k}{2}\left(b_{\mu+2k+2}-b_{\mu+2k}-2 \mu-4k-2\right)\\
 &=&-a_{\mu+2k}+\frac{\mu+2k}{2}(b_{\mu+2k}-b_{\mu+2k-2}-2 \mu-4k+2).
\end{eqnarray*}
Then $(\mu+2k) b_{\mu+2k-2}=(\mu+2k-2)b_{\mu+2k}-2a_{\mu+2k}-2(\mu+2k)^{2}+2(\mu+2k)$.  Adding $(\mu+2k) a_{\mu+2k-2}$ both sides, we get
\begin{equation}\label{e17}
  (\mu+2k)(b_{\mu+2k-2}+a_{\mu+2k-2})=(\mu+2k-2)(b_{\mu+2k}+a_{\mu+2k}-\mu-2k).  
\end{equation}

We proceed the proof by cases according the values of $\mu$.

\textbf{Case 1:} $\mu\in (0,2)$. 

$\mu-2k\neq 0$ for all $k\in \mathbb{Z}$, thus we may rewrite (\ref{e17}) as
$$\frac{b_{\mu+2k-2}+a_{\mu+2k-2}}{\mu+2k-2}=\frac{b_{\mu+2k}+a_{\mu+2k}}{\mu+2k}-1.$$
This recursive relation implies that $\frac{b_{\mu+2k}+a_{\mu+2k}}{\mu+2k}=\frac{b_{\mu}+a_{\mu}}{\mu}+k$ for all $k\in \mathbb{Z}$.
\begin{equation}\label{e14}
 b_{\mu+2 k}=\frac{(\mu+2 k)\left(a_{u}+b_{\mu}\right)}{\mu}+k(\mu+2 k)-a_{\mu+2 k}.
\end{equation}

\textbf{Case 2.} $\mu=2$.

It follows from (\ref{e17}) that $b_{0}=-a_{0}(k=0)$, and then, by (\ref{e18}), $b_{-2}=6-b_{2}-2a_{0}$. A straightforward computation shows that $\frac{b_{-2}+a_{-2}}{-2}=\frac{b_{2}+a_{2}}{2}-2$. Similarly as in Case 1, tedious computations show that $b_{\mu+2k}(k\in \mathbb{Z})$ satisfy (\ref{e14}).
\end{proof} 

\begin{remark}
The dense module for $\operatorname{Y}(\mathfrak{sl}_{2})$ are parametrized by three parameters, $\mu\in (0,2]$, $\tau\in \mathbb{C}$ and $b_{\mu}$ as in (\ref{e14}). Denote it by $V(\mu,\tau,b_{\mu})$.
\end{remark}
\section{Proof of Main Theorem}
Let $V$ be a simple weight module for $\operatorname{Y}(\mathfrak{sl}_{2})$ which admits a one-dimensional weight space. Since the simple finite-dimensional, highest weight, or lowest weight module is a such module, we assume until the proof of Theorem 1.1 that $V$ is none of them above. 

Assume $\dim(V_{\mu})=1$ and $V_{\mu}=\text{span}\{w\}$. Since $V$ is neither highest weight nor lowest weight, we have both $\dim(V_{\mu-2})>0$ and $\dim(V_{\mu+2})>0$.

\begin{lemma}\label{lem1} 
There exists $u\in V_{\mu-2}$ such that $X_{0}^{+}.u=w$. 
\end{lemma}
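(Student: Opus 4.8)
The plan is to produce a preimage of $w$ under $X_0^+$ inside the weight space $V_{\mu-2}$ by a standard $\mathfrak{sl}_2$-style argument, using that $\dim V_\mu = 1$ together with simplicity of $V$. First I would consider the subspace $X_0^+.V_{\mu-2}\subseteq V_\mu$. Since $V_\mu$ is one-dimensional, this subspace is either $\{0\}$ or all of $V_\mu=\mathbb{C}w$; in the latter case we are immediately done, so I would aim to rule out $X_0^+.V_{\mu-2}=0$. Supposing for contradiction that $X_0^+.u=0$ for every $u\in V_{\mu-2}$, I would try to show that the vectors in $V_{\mu-2}$ (or those reachable from them by lowering) generate a proper $\operatorname{Y}(\mathfrak{sl}_2)$-submodule not containing $w$, contradicting simplicity.

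To make the contradiction work I would build the candidate submodule $W = \operatorname{Y}(\mathfrak{sl}_2).V_{\mu-2}$ (equivalently the span of all $Y$-translates of a nonzero $u\in V_{\mu-2}$) and analyze its weight components. The key point is to control the component $W_\mu$: since $\dim V_\mu=1$, either $W_\mu=0$ or $w\in W$, and if $w\in W$ then $W=V$ by simplicity. So the real task is to show that under the assumption $X_0^+.V_{\mu-2}=0$ one cannot have $w\in W$. I would argue that every element of $W$ in weight $\mu$ is obtained by applying raising-type operators (built from $X_k^+$) to elements of lower weight spaces inside $W$, and trace how weight $\mu$ can only be reached from weight $\mu-2$ by a single application of some $X_k^+$. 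Here the defining relations, especially the recursion \eqref{e13} expressing $X_{l+1}^\pm$ in terms of $X_l^\pm$, $H_0$, $H_1$, let me reduce all $X_k^+$ to iterated brackets with $H_1$; combined with the hypothesis $X_0^+.V_{\mu-2}=0$, I would push to show $X_k^+.V_{\mu-2}=0$ for all $k$, hence $W_\mu=0$, hence $w\notin W$.

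The remaining ingredient is to see that $W$ is nonzero and proper: it is nonzero because $\dim V_{\mu-2}>0$ (as $V$ is not highest weight), and it is a genuine $\operatorname{Y}(\mathfrak{sl}_2)$-submodule by construction; properness follows from $w\notin W$. Since $V$ is simple this forces $W=0$, contradicting $V_{\mu-2}\subseteq W$ and $V_{\mu-2}\neq 0$. Therefore $X_0^+.V_{\mu-2}\neq 0$, so $X_0^+.u=cw$ for some nonzero $c\in\mathbb{C}$ and some $u\in V_{\mu-2}$, and rescaling $u$ gives $X_0^+.u=w$.

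I expect the main obstacle to be rigorously justifying that $X_k^+.V_{\mu-2}=0$ for all $k\geq 0$ follows from $X_0^+.V_{\mu-2}=0$: one must check that the brackets with $H_1$ appearing in \eqref{e13}, applied to a vector $u\in V_{\mu-2}$, do not create a nonzero image in $V_\mu$ — i.e. that $H_1$ preserves $V_{\mu-2}$ (true, since $H_1$ preserves weight spaces) but also that $X_0^+ H_1.u$ and $H_1 X_0^+.u$ are both forced to vanish. The second is immediate from the hypothesis, and the first reduces to $X_0^+$ applied to another element of $V_{\mu-2}$, so an induction on $k$ should close the argument cleanly once set up carefully. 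A subtle point to watch is whether $V_{\mu-2}$ could itself be infinite-dimensional, in which case "$X_0^+$ kills all of $V_{\mu-2}$" must be interpreted as a statement about the whole subspace, not a single vector — but the argument above is phrased at the level of subspaces precisely to accommodate this.
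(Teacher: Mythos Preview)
Your proposal is correct and follows essentially the same route as the paper: assume $X_0^+$ annihilates $V_{\mu-2}$, use the recursion \eqref{e13} together with the fact that $H_0,H_1$ preserve $V_{\mu-2}$ to prove by induction that every $X_k^+$ annihilates $V_{\mu-2}$, and then observe that the $\operatorname{Y}(\mathfrak{sl}_2)$-submodule generated by $V_{\mu-2}$ misses $w$, contradicting simplicity. Your care in quantifying the induction over all of $V_{\mu-2}$ (rather than a single vector) is exactly what is needed for the step $X_k^+H_1.u=0$, and in fact makes your write-up slightly more precise than the paper's on this point.
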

\begin{proof} 
 Recall that $X_{k}^{+}.V_{\mu-2}\subseteq V_{\mu}$. Suppose on the contrary that for all $u\in V_{\mu-2}$, $X_{0}^{+}.u=0$. We use mathematical induction to show that $X_{n}^{+}.u=0$ for all $n\in \mathbb{N}$. For $n=0$, it is clear. Suppose that $X_{n}^{+}.u=0$ for all $n=0,1,2,\ldots, k$. Let $n=k+1$. 
\begin{equation*}
\begin{aligned}
 X_{k+1}^{+}.u&=\frac{1}{2}[H_0,X_{k+1}^{+}].u\\
       &=\frac{1}{2}\left([H_{1},X_{k}^{+}]-H_{0}X_{k}^{+}-X_{k}^{+}H_{0}\right).u\\
       &=\frac{1}{2}\left(H_{1}X_{k}^{+}-X_{k}^{+}H_{1}-H_{0}X_{k}^{+}-X_{k}^{+}H_{0}\right).u.
\end{aligned}
\end{equation*}
It follows from $H_{i}V_{\mu-2}\subseteq V_{\mu-2}$ and the induction hypothesis that $X_{k+1}^{+}.u=0$. By mathematical induction, we have $x_{n}^{+}u=\mathbf{0}$ for all $n\in \mathbb{N}$. For any $\mathbf{0}\neq u\in V_{\mu-2}$, $W=\operatorname{Y}(\mathfrak{sl}_{2}).u$ is a proper submodule of $V$ since $V_{\mu}\not\subseteq W$, contradicting to the fact that $V$ is simple. Therefore there exists $u\in V_{\mu-2}$ such that $X_{0}^{+}.u=w$. 
\end{proof}

Recall that $V_{\mu}$ is one-dimensional, $H_{k}V_{\mu-2}\subseteq V_{\mu-2}$ and $X_{k}^{+}V_{\mu-2}\subseteq V_{\mu}$. The following corollary is obtained easily. 
\begin{corollary}\label{cor1} 
Let $u$ be as in Lemma \ref{lem1}. For $k,m\in \mathbb{Z}$, $X_{k}^{+}u=a_k w$ and $X_{0}^{+}H_{m} u=b_{m}w$, where $a_k, b_{m}\in \mathbb{C}$.
\end{corollary}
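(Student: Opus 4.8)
The statement to prove is Corollary~\ref{cor1}, which asserts that for the vector $u \in V_{\mu-2}$ from Lemma~\ref{lem1} (satisfying $X_0^+.u = w$), we have $X_k^+ u = a_k w$ and $X_0^+ H_m u = b_m w$ for scalars $a_k, b_m \in \mathbb{C}$.

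The plan is to invoke the weight-space bookkeeping lemma (the unnamed lemma stating $H_i.v \in V_\mu$, $X_i^\pm.v \in V_{\mu\pm2}$ for $v \in V_\mu$) together with the hypothesis that $V_\mu$ is one-dimensional. First I would observe that since $u \in V_{\mu-2}$, the lemma gives $X_k^+ u \in V_{(\mu-2)+2} = V_\mu$ for every $k \in \mathbb{N}$. Because $V_\mu = \operatorname{span}\{w\}$ is one-dimensional, each $X_k^+ u$ must be a scalar multiple of $w$; call that scalar $a_k$. (Note that $a_0 = 1$ by the normalization $X_0^+ u = w$ from Lemma~\ref{lem1}, though this particular value is not needed for the statement.) Next, for the second claim, I would note that $H_m u \in V_{\mu-2}$ again by the same lemma, so $X_0^+ H_m u \in V_\mu = \operatorname{span}\{w\}$, hence $X_0^+ H_m u = b_m w$ for some scalar $b_m$. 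That is essentially the entire argument — it is a direct consequence of the weight grading plus one-dimensionality, exactly as the paragraph preceding the corollary indicates.

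The one subtlety worth a sentence is that the corollary states the conclusion for $k, m \in \mathbb{Z}$, whereas the Yangian generators $X_k^+$ and $H_m$ are indexed by $k, m \in \mathbb{N}$; so I would either restrict attention to nonnegative indices or, if the authors intend negative indices to be interpreted via some convention, simply remark that no negative-index generators actually occur and read "$\mathbb{Z}$" as "$\mathbb{N}$" (equivalently $\mathbb{Z}_{\geq 0}$). There is no real obstacle here: the statement is a formal consequence of earlier results, and the only thing to be careful about is citing the correct weight-shift lemma and the one-dimensionality hypothesis. I would write the proof in two or three lines accordingly.

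In summary, the proof structure is: (i) $u \in V_{\mu-2}$ and $X_k^+$ shifts weight by $+2$, so $X_k^+ u \in V_\mu$; (ii) $\dim V_\mu = 1$ with basis $w$, so $X_k^+ u = a_k w$; (iii) $H_m$ preserves $V_{\mu-2}$, so $H_m u \in V_{\mu-2}$ and then $X_0^+ H_m u \in V_\mu$, whence $X_0^+ H_m u = b_m w$. The hardest part — such as it is — is merely making sure the index conventions are stated cleanly.
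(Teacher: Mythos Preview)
Your argument is correct and is exactly the one the paper uses: the sentence preceding the corollary (``Recall that $V_{\mu}$ is one-dimensional, $H_{k}V_{\mu-2}\subseteq V_{\mu-2}$ and $X_{k}^{+}V_{\mu-2}\subseteq V_{\mu}$'') is the entire proof, and your three-step summary spells this out verbatim. Your remark about the index set is also apt---the paper's ``$k,m\in\mathbb{Z}$'' should indeed be read as $\mathbb{N}$, since only nonnegative indices occur in $\operatorname{Y}(\mathfrak{sl}_2)$.
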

% We \textbf{first} show that there exists $u\in V_{\mu-2}$ such that $X_{0}^{+} u=w$. Otherwise, for all $u\in V_{\mu-2}$, $X_{0}^{+}u=0$. We next show that for any $u\in V_{\mu-2}$, $X_{1}^{+}.u=0$.
% \begin{equation*}
% \begin{aligned}
% 	X_{1}^{+}.u&=\frac{1}{2}[H_0,X_{1}^{+}].u\\
% 			   &=\frac{1}{2}\left([H_1,X_{0}^{+}]-H_{0}X_{0}^{+}-X_{0}^{+}H_{0}\right).u\\
% 			   &=\frac{1}{2}X_{0}^{+}H_1.u \quad (H_{1}.u\in V_{\mu-2}, x_{0}^+ \text{ annihilates } V_{\mu-2})\\
% 			   &=0
% \end{aligned}
% \end{equation*}
% \begin{equation*}
% 	\begin{aligned}
% 		x_{2}^{+}.u&=\frac{1}{2}[H_0,x_{2}^{+}].u\\
% 				   &=\frac{1}{2}\left([H_1,X_{1}^{+}]-H_{0}X_{1}^{+}-X_{1}^{+}H_{0}\right).u\\
% 				   &=\frac{1}{2}X_{1}^{+}H_1.u\\
% 				   &=0
% 	\end{aligned}
% 	\end{equation*}
% inductively, we have $X_{k}^{+}.u=0$. Then $\operatorname{Y}(\mathfrak{sl}_2).u$ is a proper submodule of $V$, contradicting to the facts that $V$ is simple. \tb{Thus there exists $u\in V_{\mu-2}$ such that $X_{0}^{+}u=w$.} 

Let $w_1=X_{0}^{+}.w=(X_{0}^{+})^2.u$. Similar as in Lemma \ref{lem1}, we claim that $w_1\neq \mathbf{0}$. Here is a graph of the relations between $u, w, w_{1}$. 
\begin{center}
 \begin{tikzpicture}[scale=0.5]
  \tikzstyle{vertex}=[circle,fill=black!0,minimum size=10pt,inner sep=0pt]
   \coordinate (A) at (-4,0);
   \coordinate (B) at (0,0);
   \coordinate (C) at (4,0);
   \coordinate (D) at (8,0);
   \coordinate (E) at (-8,0);
   \coordinate [label=above:$u$] (1) at ($(A)+(0,0.2)$);
   \coordinate [label=above:$w$] (2) at ($(B)+(0,0.2)$);
   \coordinate [label=above:$w_{1}$] (3) at ($(C)+(0,0.2)$);
    \coordinate [label=below:$V_{\mu-2}$] (4) at ($(A)-(0,0.2)$);
   \coordinate [label=below:$V_{\mu}$] (5) at ($(B)-(0,0.2)$);
   \coordinate [label=below:$V_{\mu+2}$] (6) at ($(C)-(0,0.2)$);
   \draw (A) circle (0.2);
   \draw (B) circle (0.2);
   \draw (C) circle (0.2);
   \draw (D) circle (0.2);
   \draw (E) circle (0.2);
   \draw[thick] ($(E)+(0.2,0)$)--($(A)-(0.2,0)$);
   \draw[thick] ($(A)+(0.2,0)$)--($(B)-(0.2,0)$);
   \draw[thick] ($(B)+(0.2,0)$)--($(C)-(0.2,0)$);
   \draw[thick] ($(C)+(0.2,0)$)--($(D)-(0.2,0)$);
   \draw[thick] ($(-10,0)$)--($(E)-(0.2,0)$);
   \draw[thick] ($(10,0)$)--($(D)+(0.2,0)$);	
   \fill (-11,0)  node [font={\small},scale=1.5, minimum height=3em,vertex] {...};
   \fill (11,0)  node [font={\small},scale=1.5, minimum height=3em,vertex] {...};

 \end{tikzpicture} 
 \end{center}
  
In the next proposition, we will show that $w_{1}$ is a common eigenvector for all $H_{k}$. To prove it, we need the following lemma. 

\begin{lemma}\label{lem2}
$X_{n}^{+}.w=c_{n}w_1$, where $c_{n}\in \mathbb{C}$ and $n\in \mathbb{N}$. 	
\end{lemma}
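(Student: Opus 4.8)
The plan is to argue by induction on $n$, exactly in the spirit of Lemma \ref{lem1} and Lemma \ref{lem1}'s proof. The base case $n=0$ is the definition $w_1 = X_0^+.w$, so $c_0 = 1$. Assume $X_m^+.w = c_m w_1$ for all $m \le k$; I want to deduce the same for $m = k+1$. The natural tool is the identity $X_{k+1}^+ = \tfrac{1}{2}[H_0, X_{k+1}^+]$ combined with relation (\ref{e11}), which after rearranging (as in the proof of Lemma \ref{lem1}) gives
\begin{equation*}
X_{k+1}^+ = \frac{1}{2}\bigl(H_1 X_k^+ - X_k^+ H_1 - H_0 X_k^+ - X_k^+ H_0\bigr).
\end{equation*}
Applying this to $w$: since $w \in V_\mu$ we have $H_0.w = \mu w$ and $X_k^+.w = c_k w_1 \in V_{\mu+2}$, so $H_0 X_k^+.w = (\mu+2) c_k w_1$ and $X_k^+ H_0.w = \mu c_k w_1$. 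The term $X_k^+ H_1.w$: here I need to know that $H_1.w$ is a scalar multiple of $w$, which holds because $V_\mu$ is one-dimensional and $H_1 V_\mu \subseteq V_\mu$ — say $H_1.w = \lambda w$ — so $X_k^+ H_1.w = \lambda c_k w_1$. Finally the term $H_1 X_k^+.w = c_k H_1.w_1$; this is the only place where a genuinely new object, $H_1.w_1$, enters.

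So the argument reduces to showing $H_1.w_1 \in \mathbb{C} w_1$. That is not free: $V_{\mu+2}$ need not be one-dimensional. The key observation is that $w_1 = (X_0^+)^2.u$ and we can move $H_1$ past the two copies of $X_0^+$ using relation (\ref{e11}) in the form $H_1 X_0^+ = X_0^+ H_1 + [H_1, X_0^+]$, where $[H_1, X_0^+] = [H_0, X_1^+] + (H_0 X_0^+ + X_0^+ H_0)$ by (\ref{e11}) with $k=0, l=0$. Iterating, $H_1.w_1$ is expressed as a combination of terms of the form $(X_{i}^+)(X_j^+).u$ and $(X_i^+)(X_j^+) H_m.u$ with small indices, each of which — by Corollary \ref{cor1}, which tells us $X_k^+.u = a_k w$ and $X_0^+ H_m.u = b_m w$ — collapses into a multiple of $X_{i}^+.w \in \mathbb{C} w_1$ or, when both raising operators survive on $u$, a multiple of $w_1$ directly. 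The upshot is $H_1.w_1 = \gamma w_1$ for some $\gamma \in \mathbb{C}$, and then the induction closes with $c_{k+1} = \tfrac{1}{2}\bigl(\gamma c_k - \lambda c_k - (\mu+2)c_k - \mu c_k\bigr)$, a scalar times $c_k$.

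The main obstacle is precisely the verification that $H_1.w_1$ is a scalar multiple of $w_1$; everything else is bookkeeping with the one-dimensionality of $V_\mu$ and the weight-shifting rules. I would carry it out by first establishing this auxiliary fact (pushing $H_1$ through $(X_0^+)^2$ down onto $u$ and applying Corollary \ref{cor1}), then running the clean induction above. One should double-check along the way that no term of the form $X_i^+ X_j^+ H_m.u$ with $i \ge 1$ survives uncontrolled; since $X_j^+ H_m.u \in V_\mu$ is a multiple of $w$ by Corollary \ref{cor1} (the $X_0^+ H_m.u = b_m w$ case, extended to general $X_j^+$ by the same induction as in Lemma \ref{lem1} applied to $H_m.u \in V_{\mu-2}$), this reduces again to $X_i^+.w \in \mathbb{C} w_1$, so the recursion is genuinely closed.
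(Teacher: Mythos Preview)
Your argument is circular at the very first inductive step. To pass from $n=0$ to $n=1$ via the identity $X_{k+1}^{+}=\tfrac{1}{2}(H_{1}X_{k}^{+}-X_{k}^{+}H_{1}-H_{0}X_{k}^{+}-X_{k}^{+}H_{0})$ you need $H_{1}.w_{1}\in\mathbb{C}w_{1}$. When you push $H_{1}$ through $(X_{0}^{+})^{2}$ onto $u$ using only relation (\ref{e11}), the commutator $[H_{1},X_{0}^{+}]=2X_{1}^{+}+H_{0}X_{0}^{+}+X_{0}^{+}H_{0}$ produces the term $2X_{1}^{+}X_{0}^{+}.u=2X_{1}^{+}.w$, which is exactly the $n=1$ case you are trying to establish. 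If you then feed your formula for $X_{1}^{+}.w$ back into this expression for $H_{1}.w_{1}$, the two relations collapse to the same identity (you recover a scalar relation among the parameters $b_{1},a_{1},\lambda,\mu$, not a proof that either vector lies in $\mathbb{C}w_{1}$). So relation (\ref{e11}) alone cannot close the loop.

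The paper breaks this circularity by working one step lower, writing $w=X_{0}^{+}.u$ and using relation (\ref{e12}) rather than (\ref{e11}): one computes $X_{k+1}^{+}.w=[X_{k+1}^{+},X_{0}^{+}].u+X_{0}^{+}X_{k+1}^{+}.u$ and rewrites $[X_{k+1}^{+},X_{0}^{+}]$ via (\ref{e12}) as $X_{k}^{+}X_{1}^{+}-X_{1}^{+}X_{k}^{+}+X_{k}^{+}X_{0}^{+}+X_{0}^{+}X_{k}^{+}$. Each resulting term has the form $X_{i}^{+}X_{j}^{+}.u$ with $i\le k$, and since $X_{j}^{+}.u=a_{j}w$ by Corollary~\ref{cor1}, these reduce to $X_{i}^{+}.w$ with $i\le k$, i.e.\ to the induction hypothesis. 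In particular the base case $n=1$ follows from $[X_{1}^{+},X_{0}^{+}]=(X_{0}^{+})^{2}$, giving $X_{1}^{+}.w=(1+a_{1})w_{1}$ without any appeal to $H_{1}.w_{1}$. The claim $H_{1}.w_{1}\in\mathbb{C}w_{1}$ is then proved \emph{afterwards} (Proposition~\ref{prop1}), using Lemma~\ref{lem2} as input---the opposite logical order from yours.
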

\begin{proof} 
We prove this lemma by induction. For $n=0$, it is obvious. For $n=1$, 
\begin{equation*}
 \begin{aligned}
   X_{1}^{+}.w&=X_{1}^{+}X_{0}^{+}.u \\
   &= [X_{1}^{+}, X_{0}^{+}].u+X_{0}^{+}X_{1}^{+}.u\\
   &=X_{0}^{+}X_{0}^{+}.u+a_{1}(X_{0}^{+})^2.u\\
   &=c_{1}w_{1}.
 \end{aligned}
 \end{equation*}

Suppose that the statement is true for all $n=0,1,2,\ldots,k$. Let $n=k+1$. 
\begin{equation*}
\begin{aligned}
 X_{k+1}^{+}.w&=X_{k+1}^{+}.(X_{0}^{+}.u) \\
 &= [X_{k+1}^{+}, X_{0}^{+}].u+X_{0}^{+}.(X_{k+1}^{+}.u)\\
 &=(X_{k}^{+}X_{1}^{+}-X_{1}^{+}X_{k}^{+}+X_{k}^{+}X_{0}^{+}+X_{0}^{+}X_{k}^{+}).u+a_{k+1}(X_{0}^{+})^2. u.\\
 %&=a_{1}X_{k}^{+}w-a_{k}X_{1}^{+}w+X_{k}^{+}w+a_{k}X_{0}^{+}w+a_{k+1}(X_{0}^{+})^2 u
\end{aligned}
\end{equation*}
It follows from Corollary \ref{cor1} and induction hypothesis that $x_{k+1}^{+}w=c_{k+1}w_{1}$. Therefore, by mathematical induction, the statement is true.
\end{proof}

\begin{proposition}\label{prop1} 
$H_{n}.w_{1}=d_{n}w_{1}$, where $d_{n}\in \mathbb{C}$ and $n\in \mathbb{N}$.
\end{proposition}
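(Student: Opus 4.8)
The plan is to mimic the inductive structure already used for $X_n^+$ in Lemma \ref{lem2}, but now for the action of $H_n$ on $w_1$. The key observation is that $w_1 = X_0^+.w$ and $w \in V_\mu$ is (up to scalar) the unique weight vector in the one-dimensional space $V_\mu$, so any operator that preserves the weight and lands in $V_\mu$ acts on $w$ by a scalar; similarly $w_1$ spans (or at least is a specific vector in) $V_{\mu+2}$, and I will leverage the recursion \eqref{e13}, i.e. $X_{n+1}^\pm = \pm\frac12[H_1 - \tfrac12 H_0^2, X_n^\pm]$, to trade $H_n$-actions for commutators of lower-index generators.

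First I would handle $n=0$: since $w_1 \in V_{\mu+2}$, we have $H_0.w_1 = (\mu+2)w_1$, so $d_0 = \mu+2$. Next, the crucial base case $n=1$: compute $H_1.w_1 = H_1 X_0^+.w = [H_1, X_0^+].w + X_0^+ H_1.w$. By \eqref{e13} with $l=0$, $[H_1, X_0^+] = 2X_1^+ + \tfrac12[H_0^2, X_0^+]$, and both $X_1^+.w$ and $X_0^+.w$ are scalar multiples of $w_1$ by Lemma \ref{lem2} (and $H_0^2 X_0^+.w$, $X_0^+ H_0^2.w$ are also multiples of $w_1$ since $w$ is an eigenvector of $H_0$). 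For the term $X_0^+ H_1.w$: here $H_1.w \in V_\mu$, which is one-dimensional, so $H_1.w = \lambda w$ for some scalar $\lambda$, hence $X_0^+ H_1.w = \lambda w_1$. Assembling these gives $H_1.w_1 = d_1 w_1$.

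Then I would run the induction: assume $H_m.w_1 = d_m w_1$ for all $m \le k$, and also invoke the already-established facts $X_m^+.w = c_m w_1$ (Lemma \ref{lem2}) and $X_m^+.u = a_m w$, $X_0^+ H_m.u = b_m w$ (Corollary \ref{cor1}). For $H_{k+1}.w_1$ I would use $[H_{k+1}, X_0^+] = [H_k, X_1^+] + (H_k X_0^+ + X_0^+ H_k)$, which follows from \eqref{e11} with $l=0$, to write
\begin{equation*}
H_{k+1}.w_1 = H_{k+1}X_0^+.w = [H_k, X_1^+].w + (H_k X_0^+ + X_0^+ H_k).w + X_0^+ H_{k+1}.w.
\end{equation*}
Every summand on the right reduces to a scalar multiple of $w_1$: the bracket $[H_k, X_1^+].w = H_k X_1^+.w - X_1^+ H_k.w$, where $X_1^+.w = c_1 w_1$ so $H_k X_1^+.w = c_1 d_k w_1$ by hypothesis, and $H_k.w \in V_\mu$ is a scalar times $w$ so $X_1^+ H_k.w$ is a scalar times $w_1$; the term $H_k X_0^+.w = H_k(c_0 w_1) = c_0 d_k w_1$; the term $X_0^+ H_k.w$ is a scalar times $w_1$ as before; and $X_0^+ H_{k+1}.w$ is a scalar times $w_1$ since $H_{k+1}.w \in V_\mu$. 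Hence $H_{k+1}.w_1 = d_{k+1}w_1$, closing the induction.

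The main obstacle is bookkeeping rather than conceptual: one must make sure that each commutator expansion only produces terms of the form $H_i X_j^+.w$, $X_j^+ H_i.w$, or $X_j^+ X_{j'}^+.u$-type expressions, all of which are controlled — either by the one-dimensionality of $V_\mu$ (which forces $H_i.w$ to be a scalar multiple of $w$), by Lemma \ref{lem2} (which forces $X_j^+.w$ to be a scalar multiple of $w_1$), or by the induction hypothesis on $d_m$ for $m \le k$. The identity \eqref{e11} at $l=0$ is the right tool to lower the index on $H$ while introducing only $X_1^+$ and products with $H_0, H_k$, and no genuinely new unknown appears. A minor point to double-check is that the recursion defining $d_{k+1}$ indeed closes — i.e. $d_{k+1}$ is expressed purely in terms of $d_k$ and the already-known constants $a_k, b_k, c_k, \mu$ — with no residual term proportional to $H_{k+1}.w_1$ itself; the derivation above avoids that because $H_{k+1}$ is pushed onto $w$ (landing in $V_\mu$) rather than onto $w_1$.
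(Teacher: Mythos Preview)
Your proof is correct and follows essentially the same inductive strategy as the paper: lower the index on $H$ via relation \eqref{e11}, then reduce each resulting term using Lemma~\ref{lem2}, the induction hypothesis, and the one-dimensionality of $V_\mu$. Your version is in fact slightly cleaner, since by writing $w_1 = X_0^+.w$ (rather than the paper's $w_1 = (X_0^+)^2.u$) you appeal directly to $\dim V_\mu = 1$ for the term $X_0^+ H_{k+1}.w$ and never need Corollary~\ref{cor1}.
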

\begin{proof} 
We prove this proposition by induction. For $n=0$, $H_{0}.w_{1}=(\mu+2)w_{1}$. Suppose that the statement is true for $n=0,1,2,\ldots,k$. Now let $n=k+1$.  
\begin{equation*}
\begin{aligned}
 H_{k+1}.w_1&=H_{k+1}. (\left(X_{0}^{+}\right)^2.u)\\
 &=[H_{k+1}, \left(X_{0}^{+}\right)^2].u+\left(X_{0}^{+}\right)^2H_{k+1}.u\\
 &= [H_{k+1},X_{0}^{+}]X_{0}^{+}.u+X_{0}^{+}[H_{k+1},X_{0}^{+}].u+X_{0}^{+}\left(X_{0}^{+}H_{k+1}.u\right)\\
 &= \left([H_{k},X_{1}^{+}]+H_{k}X_{0}^{+}+X_{0}^{+}H_{k}\right).(X_{0}^{+}.u)+X_{0}^{+}[H_{k+1},X_{0}^{+}].u+X_{0}^{+}.\left(b_{k+1}w\right)\\
 & \equiv \left([H_{k},X_{1}^{+}]+H_{k}X_{0}^{+}\right).(X_{0}^{+}.u) \quad (\operatorname{mod} \mathbb{C}w_{1})\\
 & \equiv [H_{k},X_{1}^{+}]X_{0}^{+}.u \quad (\operatorname{mod} \mathbb{C}w_{1})\quad (\text{by induction hypothesis})\\
 & \equiv \left(H_{k}X_{1}^{+}-X_{1}^{+}H_{k}\right).w \quad (\operatorname{mod} \mathbb{C}w_{1})\\
 & \equiv c_{1}H_{k}X_{0}^{+}.w \quad (\operatorname{mod} \mathbb{C}w_{1})\\
 &\equiv 0 \quad (\operatorname{mod} \mathbb{C}w_{1}).
\end{aligned}
\end{equation*}
By mathematical induction, we have $H_{n}w_{1}=d_{n}w_{1}$.
\end{proof}

Similar as the discussions from Lemma \ref{lem1} to Proposition \ref{prop1}, we may find a nonzero vector $v\in V_{\mu+2}$ such that $w=X_{0}^{-}v$, and show $w_{-1}=X_{0}^{-}w\neq 0$. Here is a graph of the relations of $v,w$ and $w_{-1}$.
\begin{center}
 \begin{tikzpicture}[scale=0.5]
  \tikzstyle{vertex}=[circle,fill=black!0,minimum size=10pt,inner sep=0pt]
   \coordinate (A) at (-4,0);
   \coordinate (B) at (0,0);
   \coordinate (C) at (4,0);
   \coordinate (D) at (8,0);
   \coordinate (E) at (-8,0);
   \coordinate [label=above:$w_{-1}$] (1) at ($(A)+(0,0.2)$);
   \coordinate [label=above:$w$] (2) at ($(B)+(0,0.2)$);
   \coordinate [label=above:$v$] (3) at ($(C)+(0,0.2)$);
    \coordinate [label=below:$V_{\mu-2}$] (4) at ($(A)-(0,0.2)$);
   \coordinate [label=below:$V_{\mu}$] (5) at ($(B)-(0,0.2)$);
   \coordinate [label=below:$V_{\mu+2}$] (6) at ($(C)-(0,0.2)$);
   \draw (A) circle (0.2);
   \draw (B) circle (0.2);
   \draw (C) circle (0.2);
   \draw (D) circle (0.2);
   \draw (E) circle (0.2);
   \draw[thick] ($(E)+(0.2,0)$)--($(A)-(0.2,0)$);
   \draw[thick] ($(A)+(0.2,0)$)--($(B)-(0.2,0)$);
   \draw[thick] ($(B)+(0.2,0)$)--($(C)-(0.2,0)$);
   \draw[thick] ($(C)+(0.2,0)$)--($(D)-(0.2,0)$);	
   \fill (-11,0)  node [font={\small},scale=1.5, minimum height=3em,vertex] {...};
    \fill (11,0)  node [font={\small},scale=1.5, minimum height=3em,vertex] {...};
    \draw[thick] ($(-10,0)$)--($(E)-(0.2,0)$);
    \draw[thick] ($(10,0)$)--($(D)+(0.2,0)$);
 \end{tikzpicture} 
 \end{center}
  
We will summarize some results regarding the three vectors above, without proof, into the next proposition.
\begin{proposition}\quad
 \begin{enumerate}[label=(\arabic*)]
   \item $X_{n}^{-}w=f_{n}w_{-1}$, where $f_{n}\in \mathbb{C}$ and $n\in \mathbb{N}$. 
   \item $w_{-1}$ is a common eigenvector for all $H_{k}$, where $k\in \mathbb{N}$.
 \end{enumerate} 
\end{proposition}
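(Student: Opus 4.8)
The statement is the mirror image of Lemma~\ref{lem1} through Proposition~\ref{prop1}, with the roles of $X_0^{+}$ and $X_0^{-}$ interchanged, so the plan is to carry out exactly the same chain of arguments on the negative side. First I would establish the analogue of Lemma~\ref{lem1}: there exists a nonzero $v\in V_{\mu+2}$ with $X_0^{-}.v=w$. This is proved by contradiction in the same way --- assume $X_0^{-}.v=0$ for every $v\in V_{\mu+2}$, then show by induction that $X_n^{-}.v=0$ for all $n$, using the identity $X_{n+1}^{-}.v=-\tfrac12[H_0,X_{n+1}^{-}].v$ together with (\ref{e11}) to rewrite $[H_0,X_{n+1}^{-}]$ in terms of $[H_1,X_n^{-}]$ and lower-index terms, and the fact that $H_i$ preserves $V_{\mu+2}$; this forces $\operatorname{Y}(\mathfrak{sl}_2).v$ to be a proper submodule missing $V_\mu$, contradicting simplicity. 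Then the analogue of Corollary~\ref{cor1} gives $X_k^{-}.v=a_k'w$ and $X_0^{-}H_m.v=b_m'w$ for scalars $a_k',b_m'$, since $V_\mu$ is one-dimensional.

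\textbf{Part (1): $X_n^{-}.w=f_n w_{-1}$.} Setting $w_{-1}=X_0^{-}.w=(X_0^{-})^2.v$, one first checks $w_{-1}\neq\mathbf 0$ by the same induction-plus-simplicity argument applied one step further down. For the eigenline claim I would induct on $n$. The base cases $n=0$ (trivial) and $n=1$ use the defining relation (\ref{e12}) for the minus generators: $[X_1^{-},X_0^{-}].v=-(X_0^{-}X_0^{-}+X_0^{-}X_0^{-}).v$, which together with $X_1^{-}.v=a_1'w$ expresses $X_1^{-}.w=X_1^{-}X_0^{-}.v$ as a combination of $(X_0^{-})^2.v$ and $X_0^{-}X_1^{-}.v=a_1'(X_0^{-})^2.v$, hence a multiple of $w_{-1}$. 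For the inductive step, write $X_{k+1}^{-}.w=[X_{k+1}^{-},X_0^{-}].v+X_0^{-}X_{k+1}^{-}.v$, use (\ref{e12}) to replace $[X_{k+1}^{-},X_0^{-}]$ by $[X_k^{-},X_1^{-}]-(X_k^{-}X_0^{-}+X_0^{-}X_k^{-})$ up to sign, and apply Corollary~\ref{cor1}' and the induction hypothesis to each term; everything lands in $\mathbb{C}w_{-1}$.

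\textbf{Part (2): $H_n.w_{-1}=d_n'w_{-1}$.} This is the direct analogue of Proposition~\ref{prop1}. Induct on $n$; the base case is $H_0.w_{-1}=(\mu-2)w_{-1}$. For $n=k+1$, expand $H_{k+1}.(X_0^{-})^2.v=[H_{k+1},(X_0^{-})^2].v+(X_0^{-})^2H_{k+1}.v$, split $[H_{k+1},(X_0^{-})^2]=[H_{k+1},X_0^{-}]X_0^{-}+X_0^{-}[H_{k+1},X_0^{-}]$, and rewrite each $[H_{k+1},X_0^{-}]$ via (\ref{e11}) as $[H_k,X_1^{-}]-(H_kX_0^{-}+X_0^{-}H_k)$ up to sign. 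Working modulo $\mathbb{C}w_{-1}$: the term $(X_0^{-})^2H_{k+1}.v=X_0^{-}(b_{k+1}'w)$ is already in $\mathbb{C}w_{-1}$; the terms containing $X_0^{-}H_k$ or $X_0^{-}[\,\cdot\,]$ vanish mod $\mathbb{C}w_{-1}$ by the induction hypothesis applied to $H_k.w_{-1}$; and the surviving term reduces to $[H_k,X_1^{-}]X_0^{-}.v=(H_kX_1^{-}-X_1^{-}H_k).w$, which by Part~(1) (giving $X_1^{-}.w=f_1w_{-1}$) and the induction hypothesis is again a multiple of $w_{-1}$. Hence $H_{k+1}.w_{-1}\in\mathbb{C}w_{-1}$, completing the induction.

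\textbf{Main obstacle.} There is no serious obstacle: the proof is a verbatim dualization of the material already presented, and the only thing to be careful about is bookkeeping of signs, since the structure constants in (\ref{e11}) and (\ref{e12}) carry a $\pm$ that flips when passing from $X^{+}$ to $X^{-}$, and the relation $[H_0,X_n^{-}]=-2X_n^{-}$ changes sign relative to the plus case. One should also double-check that the nonvanishing argument for $w_{-1}$ really does go through --- it does, because $V$ is neither a lowest nor a highest weight module, so no weight string terminates --- and that the one-dimensionality of $V_\mu$ (not of $V_{\mu\pm2}$) is what is actually used, exactly as on the positive side.
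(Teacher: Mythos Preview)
Your proposal is correct and follows exactly the approach the paper has in mind: the paper does not give a proof at all but simply states that the proposition follows ``similar as the discussions from Lemma~\ref{lem1} to Proposition~\ref{prop1}'' by interchanging the roles of $X^{+}$ and $X^{-}$. Your write-up carries out that dualization explicitly and more carefully than the paper does; the only cosmetic slip is that in the $n=1$ step of Part~(1) the relation from (\ref{e12}) with $k=l=0$ reads $2[X_1^{-},X_0^{-}]=-2(X_0^{-})^2$, not $[X_1^{-},X_0^{-}]=-2(X_0^{-})^2$, but this factor of two is irrelevant to the conclusion.
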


Define $w_{-n}=(X_{0}^{-})^{n}.w$ and $w_{n}=(X_{0}^{+})^{n}.w$. Denote $w$ by $w_{0}$. Let $$W=\text{span}\{\ldots,w_{-2}, w_{-1}, w_{0}, w_{1}, w_{2}, \ldots\}.$$
Replacing $u$ by $w_{0}$ in the proofs of Lemma \ref{lem2} and Proposition \ref{prop1}, we have the following lemma.

\begin{lemma} 
\begin{enumerate}[label=(\arabic*)]
 \item $X_{n}^{+}.w_{1}=e_{n}w_{2}$, where $e_{n}\in \mathbb{C}$.
 \item $w_{2}$ is a common eigenvector for $H_{k}$.
\end{enumerate}
\end{lemma}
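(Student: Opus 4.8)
The plan is to transcribe, one ``layer'' higher, the two inductions already carried out in Lemma~\ref{lem2} and Proposition~\ref{prop1}: the triple $(u,w,w_1)$ with $w=X_0^+.u$, $w_1=(X_0^+)^2.u$ gets replaced by the triple $(w_0,w_1,w_2)$ with $w_1=X_0^+.w_0$, $w_2=(X_0^+)^2.w_0$. The reason this works is that the structural facts powering the bottom-layer proofs are, at the new layer, already theorems. At the bottom we used that $V_\mu$ is one-dimensional (so $X_k^+.u\in\mathbb{C}w$ and $X_0^+H_m.u\in\mathbb{C}w$, Corollary~\ref{cor1}) and that $w_1\neq\mathbf 0$. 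At the new layer the corresponding facts are $X_k^+.w_0=c_kw_1\in\mathbb{C}w_1$ (Lemma~\ref{lem2}), $H_k.w_0\in V_\mu=\mathbb{C}w_0$ (one-dimensionality of $V_\mu$), $H_k.w_1=d_kw_1$ (Proposition~\ref{prop1}), and $w_2\neq\mathbf 0$ (an argument parallel to Lemma~\ref{lem1}; in fact the non-vanishing plays no role in the statement, only in making it non-vacuous).

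For part~(1) I would induct on $n$. The case $n=0$ is the definition of $w_2$. For $n=1$, relation~(\ref{e12}) with $k=l=0$ gives $[X_1^+,X_0^+]=(X_0^+)^2$, so
\[
X_1^+.w_1=X_1^+X_0^+.w_0=(X_0^+)^2.w_0+X_0^+(X_1^+.w_0)=w_2+c_1w_2=(1+c_1)w_2
\]
by Lemma~\ref{lem2}. For the step, write $X_{k+1}^+.w_1=[X_{k+1}^+,X_0^+].w_0+X_0^+(X_{k+1}^+.w_0)$, expand $[X_{k+1}^+,X_0^+]=X_k^+X_1^+-X_1^+X_k^+ +X_k^+X_0^+ +X_0^+X_k^+$ from~(\ref{e12}) exactly as in Lemma~\ref{lem2}, and evaluate: each $X_j^+.w_0$ is a multiple of $w_1$ (Lemma~\ref{lem2}), each $X_j^+.w_1$ with $j\le k$ is a multiple of $w_2$ (induction hypothesis), and $X_0^+(X_{k+1}^+.w_0)=c_{k+1}w_2$; collecting coefficients yields $X_{k+1}^+.w_1=e_{k+1}w_2$.

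For part~(2) I would again induct on $n$, following Proposition~\ref{prop1}. For $n=0$, $H_0.w_2=(\mu+4)w_2$ since $w_2\in V_{\mu+4}$. For the step, $H_{k+1}.w_2=[H_{k+1},(X_0^+)^2].w_0+(X_0^+)^2H_{k+1}.w_0$; the last term lies in $\mathbb{C}w_2$ because $H_{k+1}.w_0\in\mathbb{C}w_0$. Writing $[H_{k+1},(X_0^+)^2]=[H_{k+1},X_0^+]X_0^+ +X_0^+[H_{k+1},X_0^+]$ and replacing $[H_{k+1},X_0^+]$ by $[H_k,X_1^+]+H_kX_0^+ +X_0^+H_k$ via~(\ref{e11}), every resulting term reduces modulo $\mathbb{C}w_2$ just as in Proposition~\ref{prop1}: $X_0^+H_k.w_1$ and $X_0^+[H_{k+1},X_0^+].w_0$ land in $\mathbb{C}w_2$ because $H_j.w_1\in\mathbb{C}w_1$ (Proposition~\ref{prop1}) and $H_j.w_0\in\mathbb{C}w_0$; $H_kX_0^+.w_1=H_k.w_2$ is a multiple of $w_2$ by the induction hypothesis; and $[H_k,X_1^+].w_1$ is a multiple of $w_2$ since $X_1^+.w_1\in\mathbb{C}w_2$ by part~(1) and $H_k.w_1\in\mathbb{C}w_1$. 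Hence $H_{k+1}.w_2=d'_{k+1}w_2$.

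I do not expect a genuine obstacle here; the computation is a routine transcription. The one thing to watch is the order: part~(1) must be proved before part~(2), since the eigenvector computation for $w_2$ consumes the fact $X_1^+.w_1\in\mathbb{C}w_2$, and — as in Proposition~\ref{prop1} — one must keep using both that $V_\mu$ is one-dimensional and that $w_1$ is already a common eigenvector for all $H_k$.
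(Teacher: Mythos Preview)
Your proposal is correct and is exactly the approach the paper takes: the paper's entire proof of this lemma is the one sentence ``Replacing $u$ by $w_0$ in the proofs of Lemma~\ref{lem2} and Proposition~\ref{prop1}, we have the following lemma,'' and what you have written is precisely that replacement carried out in detail, with the roles of Corollary~\ref{cor1} and Proposition~\ref{prop1} taken over by Lemma~\ref{lem2} and Proposition~\ref{prop1} respectively.
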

Similarly we have 
\begin{proposition}\label{prop2} 
$w_{n}$ is a common eigenvector for $H_{k}$, where $n\in \mathbb{Z}$ and $k\in \mathbb{N}$.  
\end{proposition}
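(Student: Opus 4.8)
The plan is to induct on $n \in \mathbb{Z}$, moving outward from the center vectors $w_0, w_1, w_{-1}$ whose properties are already established. More precisely, I would prove by simultaneous induction on $|n|$ the conjunction of three statements: first, that $w_n \neq \mathbf{0}$; second, that $X_k^+.w_n \in \mathbb{C}w_{n+1}$ for all $k \in \mathbb{N}$; and third, that $H_k.w_n = d_{n,k}w_n$ for all $k \in \mathbb{N}$. The base cases $n = 0, \pm 1$ are exactly the content of Lemma \ref{lem2}, Proposition \ref{prop1}, the unnamed proposition about $v, w, w_{-1}$, and the lemma preceding the statement (which handles $n = 2$, and symmetrically $n = -2$). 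For the nonvanishing claim, I would reuse verbatim the argument of Lemma \ref{lem1}: if $w_{n+1} = X_0^+.w_n = \mathbf{0}$, then an induction using relation (\ref{e11}) in the form $X_{k+1}^+ = \tfrac12[H_0, X_{k+1}^+]$ together with the eigenvector property of $w_n$ (the inductive hypothesis) forces $X_k^+.w_n = \mathbf{0}$ for all $k$; but then $\operatorname{Y}(\mathfrak{sl}_2).w_n$ misses $w_{n-1}$ (for $n \geq 1$) or $w_{n+1}$ itself, contradicting simplicity—so one must be careful to phrase the submodule argument so that it produces a genuine proper submodule, using that some weight space stays one-dimensional or that the chain does not close up.

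The heart of the induction is propagating the ``$X_k^+$ raises $w_n$ into $\mathbb{C}w_{n+1}$'' statement and the ``$w_n$ is a simultaneous $H_k$-eigenvector'' statement. For the first, I would mimic Lemma \ref{lem2}: write $X_{k+1}^+.w_n = X_{k+1}^+ X_0^+.w_{n-1} = [X_{k+1}^+, X_0^+].w_{n-1} + X_0^+.(X_{k+1}^+.w_{n-1})$, expand the bracket via (\ref{e12}) as $\pm(X_k^+ X_1^+ - X_1^+ X_k^+ + X_k^+ X_0^+ + X_0^+ X_k^+)$, and apply the inductive hypothesis at level $n-1$ together with the already-known behavior of $X_j^+$ on $w_{n-1}$ to land everything in $\mathbb{C}w_{n+1}$. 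For the eigenvector property, I would replicate the computation in Proposition \ref{prop1}: expand $H_{k+1}.w_n = H_{k+1}.(X_0^+ . w_{n-1})$, commute $H_{k+1}$ past $X_0^+$ using (\ref{e11}) to trade $[H_{k+1}, X_0^+]$ for $[H_k, X_1^+] + H_k X_0^+ + X_0^+ H_k$, and then work modulo $\mathbb{C}w_n$ (or $\mathbb{C}w_{n+1}$, as appropriate), using the inductive hypotheses that $H_j.w_{n-1} \in \mathbb{C}w_{n-1}$ and $X_j^+.w_{n-1} \in \mathbb{C}w_n$ to kill every term, concluding that $H_{k+1}.w_n$ is a scalar multiple of $w_n$. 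The negative direction $n < 0$ runs identically with $X^-$ in place of $X^+$ and the relation $X_{l+1}^- = -\tfrac12[H_1 - \tfrac12 H_0^2, X_l^-]$ (equivalently the $X^-$ half of (\ref{e11})–(\ref{e12})).

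The main obstacle I anticipate is bookkeeping the ``modulo $\mathbb{C}w_{n}$'' reductions cleanly: in Proposition \ref{prop1} the cancellation works because one already knows $X_0^+.w$ is proportional to $w_1$ and $H_k$ acts as a scalar on $w$, so each stray term is visibly in $\mathbb{C}w_1$; in the general step one must be sure that the analogous stray terms—things like $X_0^+[H_{k+1}, X_0^+].w_{n-1}$ and $X_0^+.(b\,w_n)$—genuinely lie in $\mathbb{C}w_n$ and not in some larger space, which requires the full strength of the inductive hypothesis at level $n-1$ (both the $X^+$-image statement and the $H_k$-eigenvector statement) rather than just one of them. A secondary subtlety is the nonvanishing/simplicity argument at each stage: one should note that $X_0^-.w_{n+1}$ being a nonzero multiple of $w_n$ (which follows from $w_n \in X_0^- X_0^+.w_n + \cdots$, i.e. from $[X_0^+, X_0^-] = H_0$ acting invertibly when the weight $\mu + 2n \neq 0$, and handled separately when a weight vanishes) is what guarantees $W$ is closed and equals a submodule, so that $W = V$ by simplicity; this is where the hypothesis that $V$ has a one-dimensional weight space is silently used to pin down $W$. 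Once Proposition \ref{prop2} is in hand, combined with Lemma \ref{lem5} the action of all of $\operatorname{Y}(\mathfrak{sl}_2)$ on $W$ is determined by scalars, $W$ is a submodule, hence $W = V$, and one is set up to compute those scalars and match against the dense module $V(\mu, \tau, b_\mu)$ of Section 3.
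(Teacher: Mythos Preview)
Your proposal is correct and matches the paper's approach exactly: the paper's entire ``proof'' of Proposition~\ref{prop2} is the single word ``Similarly,'' meaning precisely the iteration you describe---replace $u$ by $w_{n-1}$ in the arguments of Lemma~\ref{lem2} and Proposition~\ref{prop1} and induct outward. One small note: the nonvanishing of $w_n$ is not actually needed for Proposition~\ref{prop2} itself (the zero vector is trivially an eigenvector) and the paper defers it to the proof of Theorem~1.1, where the correct contradiction is that $w_n$ would be a highest (or lowest) weight vector, violating the standing assumption on $V$, rather than a proper-submodule argument.
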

We now prove our main theorem.
\begin{proof}[Proof of Theorem 1.1]
Let $V$ be a simple weight module for $\operatorname{Y}(\mathfrak{sl}_{2})$ which admits a one-dimensional weight space. It is well known that every simple finite-dimensional module, highest weight module, or lowest weight module has a weight space of one-dimensional. So we may assume $V$ is none of them above. It follows from the discussions above in this section that $W\subseteq V$. 

We claim that $W$ is a $\operatorname{Y}(\mathfrak{sl}_{2})$-module. By Lemma \ref{lem5}, it is enough to show that $W$ is stable under the actions of $X_0^{\pm}$, $H_{0}$ and $H_1$.
\begin{enumerate}[label=(\arabic*)]
  \item $H_0$ stables $W$.
  
  It is easy to show that $H_0.w_{k}=(\mu+2k)w_k$.
  \item $X_{0}^{+}$ stables $W$.
  
  It is enough to show that $X_{0}^{+}.w_{-n}=p_{n}w_{-n+1}$ for all $n\in \mathbb{Z}_{>0}$. We prove it by mathematical introduction. 

  Let $n=1$. 
  \begin{equation*}
  \begin{aligned}
    X_{0}^{+}.w_{-1}&=X_{0}^{+}.(X_{0}^{-}.w_{0})\\
           &=[X_{0}^{+},X_{0}^{-}].w_{0}+X_{0}^{-}.(X_{0}^{+}.w_{0})\\
           &=H_{0}.w_{0}+X_{0}^{-}.w_{1}\\
           &= \mu w_{0}+f_{0}w_{0}\\
           &=(\mu+f_{0})w_{0}.
  \end{aligned}
  \end{equation*}
  Suppose that it is true for $n=1,2,\ldots,k$.
  Let $n=k+1$. 
  \begin{equation*}
    \begin{aligned}
      X_{0}^{+}.w_{-k-1}&=X_{0}^{+}.(X_{0}^{-}.w_{-k})\\
             &=[X_{0}^{+},X_{0}^{-}].w_{-k}+X_{0}^{-}.(X_{0}^{+}.w_{-k})\\
             &=H_{0}.w_{-k}+X_{0}^{-}.(p_{k}w_{-k+1})\\
             &= (\mu-2k)w_{-k}+p_{k}w_{-k}\\
             &= (\mu-2k+p_{k})w_{-k}.
    \end{aligned}
    \end{equation*}
By Mathematical induction, we have $X_{0}^{+}.w_{-n}=p_{n}w_{-n+1}$ for all $n\in \mathbb{Z}_{>0}$. 
  \item $X_0^{-}$ stables $W$.
  å
  Similar to Case (2).
  
  \item $H_1$ stables $W$.
 
  It follows from Proposition \ref{prop2}.
 \end{enumerate}

Since $V$ is simple, $V=W$. Recall that we assume $V$ is an simple weight module for $\operatorname{Y}(\mathfrak{sl}_{2})$ which is neither highest nor lowest weight. Thus $w_{n}\neq 0$ for all $n\in \mathbb{Z}$, so $V$ is infinite-dimensional. It is obvious that $V$ is an irreducible $\mathfrak{sl}_{2}$-module, which is neither highest weight nor lowest weight. It follows from Theorem 3.4.1 in \cite{Ma} that $V$ is a dense module of $\mathfrak{sl}_{2}$. By what we proved in Section 3, $V$ is isomorphic to a dense module $V(\mu,\tau, b_{\mu})$.
\end{proof}

%Summarizing what proved above, we have a characterization theorem for the simple weight modules for $\operatorname{Y}(\mathfrak{sl}_{2})$ which admits a one-dimensional weight space.

\section{Modules with two-dimensional weight spaces}
For any simple weight $ \mathfrak{sl}_{2}$-module, its weight space is one-dimensional. In this section, we show that this trend does not apply to the Yangian $\operatorname{Y}(\mathfrak{sl}_{2})$, by giving a class of simple weight modules whose every weight space is uniformly two-dimensional.

Let $U=V(\mu,\tau, b_{\mu})\otimes W_{1}(r)$ in this section, where $\tau\neq (\mu+2k+1)^{2}$ for all $k\in \mathbb{Z}$. Immediately we know that $U$ is a weight module for $\operatorname{Y}(\mathfrak{sl}_{2})$, and the weight spaces $U_{\mu+2k+1}=\operatorname{span}\{v_{\mu+2k+2} \otimes w_{-1}, v_{\mu+2k} \otimes w_{1}\}$. 
% \begin{lemma}
% If $b_{\mu}=\frac{\mu^2+\mu\pm\mu\sqrt{\tau}}{2}-a_{\mu}+\mu(r-1)$, then $U$ is reducible. $$
% \end{lemma}
% \begin{proof}
% Let $a=r-1-\frac{b_{\mu}+a_{\mu}}{u}$. and $v=a\left(v_{\mu+2} \otimes w_{-1}\right)+\left(v_{\mu} \otimes w_{1}\right)$. One can show that $$H_{1}.v=$$ 
% \end{proof}
\begin{lemma}\label{lem4} 
Let $V$ be a nonzero submodule of $U$. If $U_{\mu+2k+1}\subseteq V$ for some $k\in \mathbb{Z}$, then $V=U$.
\end{lemma}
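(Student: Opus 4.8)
The plan is to show that once a single weight space $U_{\mu+2k+1}$ lies in the submodule $V$, the raising and lowering generators $X_0^{\pm}$ (together with the compatibility forced by the Yangian structure) propagate membership to every adjacent weight space, so that $V$ contains all of $U$. First I would fix the basis $\{v_{\mu+2k+2}\otimes w_{-1},\, v_{\mu+2k}\otimes w_{1}\}$ of $U_{\mu+2k+1}$ and compute the action of $X_0^{+}$ on each basis vector. Using $\Delta(X_0^{+}) = X_0^{+}\otimes 1 + 1\otimes X_0^{+}$, the action of $X_0^+$ on $V(\mu,\tau,b_\mu)$ from Lemma~\ref{lem3}, and on $W_1(r)$ from Corollary~\ref{slw1a}, one gets explicit combinations landing in $U_{\mu+2k+3} = \operatorname{span}\{v_{\mu+2k+4}\otimes w_{-1},\, v_{\mu+2k+2}\otimes w_1\}$. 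The key point is that the $2\times 2$ matrix describing $X_0^{+}\colon U_{\mu+2k+1}\to U_{\mu+2k+3}$ is (generically, under the hypothesis $\tau\neq(\mu+2k+1)^2$) invertible, so the image of $U_{\mu+2k+1}$ is all of $U_{\mu+2k+3}$; hence $U_{\mu+2k+3}\subseteq V$. Symmetrically, $X_0^{-}$ (with $\Delta(X_0^{-}) = X_0^{-}\otimes 1 + 1\otimes X_0^{-}$) maps $U_{\mu+2k+1}$ onto $U_{\mu+2k-1}$.

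Next I would run this as an induction in both directions: starting from the given $k$, repeatedly applying $X_0^{+}$ shows $U_{\mu+2m+1}\subseteq V$ for all $m\geq k$, and repeatedly applying $X_0^{-}$ shows it for all $m\leq k$. Since $U = \bigoplus_{m\in\mathbb{Z}} U_{\mu+2m+1}$ (the weight decomposition recorded just before the lemma), this gives $U\subseteq V$, and as $V\subseteq U$ we conclude $V=U$.

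The main obstacle is verifying that the transition maps $X_0^{\pm}$ between consecutive weight spaces really are invertible — i.e. computing the $2\times 2$ determinant and checking it is nonzero for every $k$ under the standing assumption $\tau\neq(\mu+2k+1)^2$ for all $k$ (equivalently $a_{\mu+2k}\neq 0$ for all $k$, which is exactly the density condition in Lemma~\ref{lem3}). If for some $k$ the map $X_0^{+}$ happens to be rank one, one must instead argue that the complementary basis vector of $U_{\mu+2k+3}$ is still reachable — for instance by also using $X_1^{\pm}$ via the formula $X_1^{\pm} = \pm\frac12[H_1 - \tfrac12 H_0^2,\,X_0^{\pm}]$ from \eqref{e13} together with the eigenvalues $b_{\mu+2k}$ and $r^k$ of $H_1$ on the two tensor factors, which gives a second, linearly independent transition map. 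I expect that the hypothesis on $\tau$ is precisely what is needed to make the $X_0^{\pm}$ alone suffice, so the bulk of the work is the determinant computation; handling $X_1^{\pm}$ as a fallback is the only place where the Yangian (as opposed to just $\mathfrak{sl}_2$) structure enters, and it is the part most likely to need care.
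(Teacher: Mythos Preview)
Your proposal is correct and follows exactly the paper's approach: compute $X_0^{\pm}$ on the two basis vectors via the coproduct and observe that the resulting $2\times 2$ transition matrices are invertible, then induct in both directions. Your worry about a rank-one degeneration is unfounded: the matrix of $X_0^{+}$ is lower-triangular with diagonal entries $a_{\mu+2k+2}$ and $a_{\mu+2k}$, and that of $X_0^{-}$ is upper-triangular with diagonal entries $1$ and $1$, so both determinants are nonzero for all $k$ precisely by the density hypothesis $a_{\mu+2k}\neq 0$; no recourse to $X_1^{\pm}$ is needed.
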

\begin{proof} This lemma will be proved if one can show that both $U_{\mu+2k-1}\subseteq V$ and $U_{\mu+2k+3}\subseteq V$, which follow from the following computations easily. 
 $$
 \begin{aligned}
 X_{0}^{+}.\left(v_{\mu+2k+2}\otimes w_{-1}\right)&=a_{\mu+2k+2} \left(v_{\mu+2k+4} \otimes w_{-1}\right)+\left(v_{\mu+2k+2} \otimes w_{1}\right),\\
 X_{0}^{+}.\left(v_{\mu+2k}\otimes w_{1}\right)&=a_{\mu+2k} \left(v_{\mu+2k+2} \otimes w_{1}\right),\\
 X_{0}^{-}.\left(v_{\mu+2k+2}\otimes w_{-1}\right)&=v_{\mu+2k} \otimes w_{-1},\\
 X_{0}^{-}.\left(v_{\mu+2k}\otimes w_{1}\right)&=v_{\mu+2k} \otimes w_{-1}+v_{\mu+2k-2} \otimes w_{1}.\\
 \end{aligned}
 $$
\end{proof}

\begin{thm} 
 $U$ is simple if and only if $b_{\mu}\neq \frac{\mu^{2}+\mu \pm \mu \sqrt{\tau}}{2}-a_{\mu}+\mu(r-1)$.
\end{thm}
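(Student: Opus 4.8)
The plan is to analyze when $U=V(\mu,\tau,b_\mu)\otimes W_1(r)$ fails to be simple by looking for a proper nonzero submodule, which by Lemma \ref{lem4} can never contain a full weight space $U_{\mu+2k+1}$. Since each weight space is two-dimensional and $V$ must meet $U_{\mu+2k+1}$ in at most a line for every $k$ (otherwise $V=U$), a proper submodule $V$, if it exists, is a ``graph-like'' submodule: for each $k$ there is a distinguished line $\mathbb{C}\xi_k\subseteq U_{\mu+2k+1}$ with $V=\bigoplus_k \mathbb{C}\xi_k$. First I would write $\xi_k = v_{\mu+2k+2}\otimes w_{-1} + \lambda_k\, v_{\mu+2k}\otimes w_1$ for scalars $\lambda_k$ (or the degenerate form $\xi_k = v_{\mu+2k}\otimes w_1$), and impose that $X_0^\pm.\xi_k$ again lies in the appropriate line. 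Using the four explicit formulas for $X_0^\pm$ already displayed in the proof of Lemma \ref{lem4}, this gives recursion relations among the $\lambda_k$; but $X_0^\pm$ alone only see the $\mathfrak{sl}_2$-structure, so these constraints will have solutions for \emph{every} $b_\mu$. The genuine obstruction must come from $H_1$.

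Next I would compute the action of $H_1$ on a general element of $U_{\mu+2k+1}$. Using $\Delta(H_1)=H_1\otimes 1 + H_0\otimes H_0 + 1\otimes H_1 - 2X_0^-\otimes X_0^+$ together with the eigenvalues of $H_1$ on $V(\mu,\tau,b_\mu)$ (namely $b_{\mu+2k}$ from \eqref{e14}) and on $W_1(r)$ (namely $H_1.w_1 = r w_1$, $H_1.w_0=-r w_0$, hence $H_1.w_{-1}$ computed from $w_{-1}=X_0^-.w_0$ in the tensor-product module, or directly), I get a $2\times 2$ matrix for $H_1$ on each $U_{\mu+2k+1}$ in the basis $\{v_{\mu+2k+2}\otimes w_{-1},\, v_{\mu+2k}\otimes w_1\}$. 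The term $-2X_0^-\otimes X_0^+$ contributes the off-diagonal coupling. Requiring the line $\mathbb{C}\xi_k$ to be $H_1$-stable forces $\xi_k$ to be an eigenvector of this $2\times 2$ matrix; combined with the $X_0^\pm$-recursions this over-determines the system, and consistency across all $k$ should collapse to a single condition on $b_\mu$, $\mu$, $\tau$, $r$. I expect the eigenvalues of the $2\times 2$ matrix to involve $\sqrt{\tau}$ (since $a_\mu$ and the $b$'s are built from $\tau$ via $a_{\mu+2k}=\tfrac14(\tau-(\mu+2k+1)^2)$), producing exactly the two values $b_\mu = \frac{\mu^2+\mu\pm\mu\sqrt\tau}{2} - a_\mu + \mu(r-1)$.

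So the argument has two directions. For the ``only if'': assuming $b_\mu$ equals one of the two exceptional values, I exhibit the submodule explicitly — solve the eigenvector/recursion system to get concrete $\lambda_k$ (closed form, likely a ratio of linear-in-$k$ terms, or a telescoping product), verify directly that $W:=\bigoplus_k\mathbb{C}\xi_k$ is stable under $X_0^\pm$, $H_0$, $H_1$, hence under $\operatorname{Y}(\mathfrak{sl}_2)$ by Lemma \ref{lem5}, and note it is proper (it misses, e.g., $v_{\mu+2k}\otimes w_{-1}$ or the complementary line). For the ``if'': assuming $b_\mu$ is not exceptional, take any nonzero submodule $V$, pick $0\neq \eta\in V\cap U_{\mu+2k+1}$ for some $k$; if $\dim(V\cap U_{\mu+2k+1})=2$ we are done by Lemma \ref{lem4}, so suppose it is a line $\mathbb{C}\eta$. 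Apply $X_0^+$, $X_0^-$ to propagate to neighbouring weight spaces, getting a candidate line in each; then apply $H_1$ and show that $H_1.\eta\notin\mathbb{C}\eta$ unless $b_\mu$ is exceptional — giving a second independent vector in $V\cap U_{\mu+2k+1}$, hence the full weight space, hence $V=U$ again by Lemma \ref{lem4}.

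The main obstacle will be the bookkeeping in the $H_1$-computation: getting $H_1.(v_{\mu+2k+2}\otimes w_{-1})$ right requires knowing the $H_1$-eigenvalue on $w_{-1}$ inside $W_1(r)$ and correctly assembling the three coproduct terms plus the $-2X_0^-\otimes X_0^+$ cross term, and then checking that the eigenvector condition is \emph{the same} condition on $b_\mu$ for every $k$ (this should follow from the identity \eqref{e17}/\eqref{e18} relating consecutive $b$'s, but it needs to be verified, not assumed). A secondary subtlety is handling the degenerate candidate lines (those of the form $\mathbb{C}\,v_{\mu+2k}\otimes w_1$ with no $w_{-1}$-component) and making sure the case analysis is exhaustive; I would dispatch these by the same $X_0^\pm$ propagation, showing such a line cannot be $X_0^-$-stable for all indices, so it never yields a submodule.
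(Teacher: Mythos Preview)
Your plan is correct and matches the paper's approach: reduce a putative proper submodule to a system of one-dimensional lines via Lemma~\ref{lem4}, compute the (triangular) $2\times2$ matrix of $H_1$ on each weight space from $\Delta(H_1)$, and combine the $H_1$-eigenline condition with $X_0^{\pm}$-propagation to force the exceptional value of $b_\mu$. The paper streamlines in two places you might adopt: (i) rather than checking consistency for all $k$, it imposes the $H_1$-eigenline condition only at $V_{\mu+1}$ and at $V_{\mu+3}$ (the latter reached from the former via $X_0^+$), obtaining two expressions for $a/b$ whose equality is the quadratic $t^2+(\mu+1)t-a_\mu=0$ in $t=a/b$, giving $t=\tfrac{-(\mu+1)\pm\sqrt\tau}{2}$ and hence the stated $b_\mu$ directly; (ii) for the converse it does not solve the recursion for the $\lambda_k$ in closed form but, after checking that the chosen $v$ satisfies $H_1 v\in\mathbb C v$, $H_1(X_0^{\pm}v)\in\mathbb C\,X_0^{\pm}v$ and $X_0^-X_0^+v\in\mathbb C v$, invokes the Section~4 analysis (Lemma~\ref{lem5} and Proposition~\ref{prop2}) to conclude that $\operatorname{span}\{(X_0^{\pm})^n v:n\ge 0\}$ is already a proper $\operatorname{Y}(\mathfrak{sl}_2)$-submodule. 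One wording caution in your ``if'' paragraph: the claim ``$H_1.\eta\notin\mathbb C\eta$ unless $b_\mu$ is exceptional'' is not literally true, since the triangular $H_1$-matrix always has eigenlines; the obstruction appears only when you also require that the $X_0^+$-image of that eigenline be an $H_1$-eigenline at the next weight, which is exactly the over-determination you identified earlier in the proposal.
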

\begin{proof} We first prove the sufficiency by contrapositive method. Suppose on the contrary that $U$ has a proper submodule $V$. As a $\mathfrak{sl}_{2}$-module, $V$ is a weight module, and $V_{\mu+2k+1}=V\cap U_{\mu+2k+1}$, where $k\in \mathbb{Z}$. It follows from Lemma \ref{lem4} that $\dim(V_{\mu+2k+1})\leq 1$. Moreover, there exists $k\in \mathbb{Z}$ such that $\dim(V_{\mu+2k+1})=1$. Without loss of generality, we may assume that $k=0$. 

It follows from the coproduct of $\operatorname{Y}(\mathfrak{sl}_{2})$ that 
\begin{equation*}
\begin{aligned}
 H_{1}.\left(v_{\mu+2} \otimes w_{-1}\right)=&H_{1}.v_{\mu+2} \otimes w_{-1}+H_{0}.v_{\mu+2} \otimes H_{0}.w_{-1}\\
 &+v_{\mu+2} \otimes H_{1}.w_{-1}-2X_{0}^{-}.v_{\mu+2} \otimes X_{0}^{+}.w_{-1}\\
 =& b_{\mu+2}v_{\mu+2} \otimes w_{-1}-(\mu+2)v_{\mu+2} \otimes w_{-1}\\
 &-rv_{\mu+2} \otimes w_{-1}-2v_{\mu}\otimes w_{1}\\
 =& \left(b_{\mu+2}-\mu-2-r\right)\left(v_{\mu+2} \otimes w_{-1}\right)-2\left(v_{\mu} \otimes w_{1}\right).
\end{aligned}
\end{equation*}
Similarly we have 
$$H_{1}.\left(v_{\mu}\otimes w_{1}\right)=\left(b_{\mu}+\mu+r\right)\left(v_{\mu} \otimes w_{1}\right).$$
Let $v=a\left(v_{\mu+2} \otimes w_{-1}\right)+b\left(v_{\mu} \otimes w_{1}\right)$ be a nonzero element in $V_{\mu+1}$. It is not hard to see from Lemma \ref{lem4} that both $a\neq 0$ and $b\neq 0$.
\begin{equation*}\label{e15}
\begin{aligned}
H_{1}.(a&\left(v_{\mu+2} \otimes w_{-1}\right)+b\left(v_{\mu} \otimes w_{1}\right))\\
=&a(H_{1}.(v_{\mu+2} \otimes w_{-1}))+b(H_{1}.(v_{\mu} \otimes w_{1}))\\
=&a(\left(b_{\mu+2}-\mu-2-r\right)\left(v_{\mu+2} \otimes w_{-1}\right)-2\left(v_{\mu} \otimes w_{1}\right))+b((b_{\mu}+\mu+r)(v_{\mu} \otimes w_{1}))\\
=&a(b_{\mu+2}-\mu-2-r)v_{\mu+2} \otimes w_{-1}+(b(b_{\mu}+\mu+r)-2a)v_{\mu} \otimes w_{1}.
\end{aligned}
\end{equation*}
Since $V_{\mu+1}$ is 1-dimensional, $H_{1}.v$ is a scalar of $v$. Then we obtain $$a(b(b_{\mu}+\mu+r)-2a)=ba(b_{\mu+2}-\mu-2-r),$$ which implies that %%$$b_{\mu}+\mu+r-2\frac{a}{b}=b_{\mu+2}-\mu-2-r.$$ Thus 
%Then we have 
$$\frac{a}{b}=\frac{1}{2}(b_{\mu}-b_{\mu+2}+2\mu+4)=r-1-\frac{b_{\mu}+a_{\mu}}{\mu}.$$

A computation shows $X_{0}^{+}.v=aa_{\mu+2}\left(v_{\mu+4} \otimes w_{-1}\right)+\left(a+ba_{\mu}\right)\left(v_{\mu+2} \otimes w_{1}\right)$.
\begin{equation*}\label{e16}
 \begin{aligned}
 H_{1}.(&aa_{\mu+2}\left(v_{\mu+4} \otimes w_{-1}\right)+\left(a+ba_{\mu}\right)\left(v_{\mu+2} \otimes w_{1}\right))\\
 =&aa_{\mu+2}\left(b_{\mu+4}-\mu-4-r\right)\left(v_{\mu+4} \otimes w_{-1}\right)\\
 &+\left(\left(a+ba_{\mu}\right)\left(b_{\mu+2}+2+\mu+r\right)-2aa_{\mu+2}\right)\left(v_{\mu+2} \otimes w_{1}\right)\\
 \end{aligned}
 \end{equation*}
 % Then $aa_{\mu+2}\left(\left(a+ba_{\mu}\right)\left(b_{\mu+2}+2+\mu+r\right)-2aa_{\mu+2}\right)=aa_{\mu+2}\left(b_{\mu+4}-\mu-4-r\right)\left(a+ba_{\mu}\right)$, which implies that $$\left(a+ba_{\mu}\right)\left(b_{\mu+4}-b_{\mu+2}-2\mu-6-2r\right)=-2aa_{\mu+2}.$$
  Since $\dim(V_{\mu+3})\leq 1$, similarly as above, we have %$$\left(a+ba_{\mu}\right)\left(b_{\mu+4}-b_{\mu+2}-2\mu-6-2r\right)=-2aa_{\mu+2}.$$
  $$\frac{a}{b}=\frac{-a_{\mu}\left(2-r+\frac{b_{\mu}+a_{\mu}}{\mu}\right)}{a_{\mu+2}+2-r+\frac{b_{\mu}+a_{\mu}}{\mu}}.$$

 so $\frac{a}{b}=r-1-\frac{b_{\mu}+a_{\mu}}{\mu}=\frac{-a_{\mu}\left(2-r+\frac{b_{\mu}+a_{\mu}}{\mu}\right)}{a_{\mu+2}+2-r+\frac{b_{\mu}+a_{\mu}}{\mu}}$. Let $t=\frac{a}{b}=r-1-\frac{b_{\mu}+a_{\mu}}{\mu}$, then $t=\frac{-a_{\mu}\left(1-t\right)}{a_{\mu+2}+1-t}$. Thus  
$t^{2}-t-a_{\mu+2}t+a_{\mu}t-a_{\mu}=0$. Since $a_{\mu}-a_{\mu+2}=\mu+2$, $t^2+\left(\mu+1\right)t-a_{\mu}=0$. Thus
$t=\frac{a}{b}=\frac{-\left(\mu+1\right)\pm\sqrt{\tau}}{2}$, which implies that $b_{\mu}=\frac{\mu^2+\mu\pm\mu\sqrt{\tau}}{2}-a_{\mu}+\mu(r-1)$. We get a contradiction. Therefore, $U$ is simple.

We next show the necessity. It is equivalent to show that $U$ is reducible if $b_{\mu}=\frac{\mu^2+\mu\pm\mu\sqrt{\tau}}{2}-a_{\mu}+\mu(r-1)$.
Let $v=a\left(v_{\mu+2} \otimes w_{-1}\right)+\left(v_{\mu} \otimes w_{1}\right)$, where $a=\frac{-\left(\mu+1\right)\pm\sqrt{\tau}}{2}$. It follows from above computations that $H_{1}.v=c_{1}v$ and $H_{1}.(X_{0}^{+}.v)=c_{2}X_{0}^{+}.v$, where $c_{1}, c_{2}\in \mathbb{C}$. Similarly one can show that $H_{1}.(X_{0}^{-}.v)=c_{3}X_{0}^{-}.v$ for some $c_{3}\in \mathbb{C}$.  Moreover, $$X_{0}^{-}.(X_{0}^{+}.v)=(a+a a_{\mu+2}+a_{\mu})v_{\mu+2}\otimes w_{-1}+(a+a_{\mu})v_{\mu}\otimes w_{1}.$$ A straightforward computation shows that it is a scalar of $v$.
It follows from the discussions in Section 2 that $V=\operatorname{span}\{\ldots, (X_{0}^{-})^{2}.v, X_{0}^{-}.v,v,X_{0}^{+}.v,(X_{0}^{+})^{2}.v,\ldots\}$ is a submodule whose weight space of weight $\mu+1$ is 1-dimensional. $V$ is a proper submodule of $U$, thus $U$ is reducible.  	
\end{proof}

% \begin{thm} 
% 	Let $V$ be an simple weight module which admits a 1-dimensional weight space. Then $V$ is isomorphic to one of the followings:
% 	\begin{enumerate}
% 		\item $V$ is a finite-dimensional simple module
% 		\item $V$ is a highest-weight module
% 		\item $V$ is a lowest-weight module
% 		\item $V$ is a dense module.
% 	\end{enumerate}
%    \end{thm}
%    \begin{proof} 
%    It is well known that the simple finite-dimensional, highest-weight or lowest-weight module has a one-diemsnional weight space. Thus in what follows, we assume that $V$ is not one of three types above. Immediately we have $w_{n}\neq \mathbf{0}$. 
  
%    \end{proof}


\begin{thebibliography}{2}

\bibitem{VMFY}
Y. Billig, V. Futorny and A. Molev, Verma modules for Yangians, Lett. Math. Phys. 78 (2006), 1-16.

\bibitem{Lau} D. Britten, M. Lau and F. Lemire, {\em Weight modules for current algebras}, J Algebra. {\bf 440} (2015) 245--263.

\bibitem{ChPr5} V. Chari and A. Pressley, \emph{Yangians and R-matrices}, L'Enseign. Math. {\bf 36} (1990), 267--302.

\bibitem{ChPr4} V. Chari and A. Pressley, \emph{Fundamental representations of Yangians and singularities of $R$-matrices}, J.Reine angew. Math. {\bf 417} (1991), 87-128.

\bibitem{ChPr3} V. Chari and A. Pressley, \emph{Yangians: their Representation and Characters}, Acta Applicandae Mathematica, {\bf 44} (1996), 39-58.

\bibitem{Dr2} V. Drinfeld, \emph{A new realization of Yangians and of quantum affine algebras}, (Russian) Dokl. Akad. Nauk SSSR. \textbf{296} (1987), no. 1, 13--17; translation in Soviet Math. Dokl. \textbf{36} (1988), 212--216.

\bibitem{Guay} N. Guay, H. Nakajima and C. Wendlandt, \emph{Coproduct for Yangians of affine Kac-Moody algebras}, Adv Math. \textbf{338} (2018), 865--911.

\bibitem{KhNaPa} S. Khoroshkin, M. Nazarov and P. Papi, {\em Irreducible representations of Yangians}, J Algebra. {\bf 346} (2011) 189--226.


\bibitem{Leve} S. Levendorski\v \i, {\em On generators and defining relations of Yangians}, J. Geom. Phys. {\bf 12} (1993) 1--11.

\bibitem{Ma} V. Mazorchuk, {\em Lectures On $\mathfrak{sl}_2(\mathbb{C})$-modules}, World Scientific Publishing Company, 2009.

\bibitem{Mo2} A. Molev, {\em Irreducibility criterion for tensor products of Yangian evaluation modules}, Duke Math. J. {\bf 112} (2002), 307--341.

\bibitem{YACLA}
A. Molev, \emph{``Yangians and Classical Lie Algebras''}, Mathematical Surveys and Monographs, 143, American Mathematical Society, Providence, RI, 2007.


\bibitem{NT} T. Nakanishi, {\em Fusion, mass, and representation theory of the Yangian algebra}, Nucl. Phys. B. {\bf 439} (1995) 441--460.

\bibitem{TaGu} Y. Tan, and N. Guay. {\em Local Weyl modules and cyclicity of tensor products for Yangians}, Journal of Algebra {\bf 432} (2015): 228-251.


\end{thebibliography}
\end{document}